\documentclass[11pt,reqno]{amsart}
\usepackage[T1]{fontenc}

\usepackage{color}
\definecolor{MyLinkColor}{rgb}{0,0,0.4}


\newcommand{\R}{{\mathbb R}}
\newcommand{\E}{{\mathcal E}}

\newcommand{\N}{{\mathbb N}}

\newcommand{\cF}{\mathcal{F}}

\newcommand{\cK}{\mathcal{K}}

\newcommand{\D}{\mathbb{D}}

\newcommand{\rhp}{\rightharpoonup}

\newcommand{\p}{\partial}

\newcommand{\e}{\varepsilon}

\newcommand{\id}{\mathop{\rm id}\nolimits}

\newcommand{\J}{\mathop{\rm J}\nolimits}

\newtheorem{thm}{Theorem}[section]

\newtheorem{lemma}[thm]{Lemma}

\theoremstyle{remark} 
\newtheorem{rem}[thm]{Remark}
\textwidth16.5cm
\oddsidemargin+0.2cm
\evensidemargin+0.2cm

\numberwithin{equation}{section}

\title[A thin film approximation of the Muskat problem]{A thin film approximation of the Muskat problem with gravity and capillary forces}
\thanks{}

\author[Ph. Lauren\c cot]{Philippe Lauren\c cot}
\address{Institut  de Math\'ematiques de Toulouse, CNRS UMR 5219, Universit\'e de Toulouse, F-31062  Toulouse cedex 9, France}
\email{laurenco@math.univ-toulouse.fr}

\author[B.--V. Matioc]{Bogdan--Vasile Matioc}
\address{Institut f\" ur Mathematik, Universit\" at Wien, Nordbergstra{\ss}e 15, 1090 Wien, {\"O}sterreich}
\email{bogdan-vasile.matioc@univie.ac.at}

\date{\today}

\subjclass[2010]{35K65, 35K41, 47J30, 35Q35}
\keywords{thin film, degenerate parabolic system, gradient flow, Wasserstein distance}

\usepackage[colorlinks=true,linkcolor=MyLinkColor,citecolor=MyLinkColor]{hyperref} 

\begin{document}

\begin{abstract}
Existence of nonnegative weak solutions is shown for a thin film approximation of the Muskat problem with gravity and capillary forces taken into account. The model describes the space-time evolution of the heights of the two fluid layers and is a fully coupled system of two fourth order degenerate parabolic equations. The existence proof relies on the fact that this system can be viewed as a gradient flow for the $2-$Wasserstein distance in the space of probability measures with finite second moment.
\end{abstract}

\maketitle

\section{Introduction and main result}\label{S:0}
The Muskat problem is a free boundary problem describing the motion of two immiscible fluids with different densities and viscosities in a porous medium (such as intrusion of water into oil). 
It gives the space and time evolution of the heights $f\ge 0$ and $g\ge 0$ of the two fluid layers, with the first layer, of height $f$, located on a impermeable horizontal bottom and the second one, of height $g$, 
on top of it, as well as that of the pressure fields inside the fluids. As it involves four unknowns and two free boundaries, one separating the lower and the upper fluid  and one separating the upper fluid and the air, it is a complex problem. 
Recently, using a lubrication approximation, see, e.g., \cite[Chapter~5.B]{Le07}, a thin film approximation to the Muskat problem has been derived in \cite{EMM12} which retains only the
 heights $f$ and $g$ of the two fluid layers as unknowns and reads
\begin{subequations}\label{eq:problem}
\begin{equation}\label{Prob}
\left\{
\begin{aligned}
\p_t f=&\mathrm{div}\left[f\left(-A \nabla\Delta f-B \nabla\Delta g +a\nabla f+b \nabla g\right)\right],\\[1ex]
\p_t g=&\mathrm{div}\left[g\left(-\nabla\Delta f-\nabla\Delta g+c \nabla f+c \nabla g\right)\right],
\end{aligned}\right.\qquad (t,x)\in (0,\infty)\times \R^2,
\end{equation} 
together with initial conditions
\begin{equation}\label{eq:bc1}
 (f,g)(0)=(f_0,g_0), \qquad x\in \R^2.
\end{equation}
\end{subequations}
The parameters $(A,B,a,b,c)$ involved in \eqref{Prob} depend on the densities, viscosities, and surface tensions of the fluids and are assumed to satisfy
\begin{equation}\label{constants}
(a, b, c)\in[0,\infty)^3, \qquad cB=b,\qquad\text{and} \qquad A>B>0.
\end{equation}
Let us recall that the second order terms in \eqref{Prob} account for gravity forces while the fourth order terms result from capillary forces in the original Muskat problem.

The problem \eqref{Prob} is a fourth order degenerate parabolic system with a full diffusion matrix. Its parabolicity has been exploited in \cite{EMxx} to show 
the local existence and uniqueness of positive strong solutions to \eqref{Prob} in a bounded interval $(0,L)$ with no slip boundary conditions. 
Global existence for initial data close to a positive flat steady state is also proved when $\pi^2(A-B)/L^2 + (a-b)>0$ as well as the stability of this steady state. 
The existence and stability of flat and non-flat stationary solutions are also discussed according to the values of the parameters. 
Nonnegative global solutions have also been constructed either when only gravity is taken into account ($A=B=0$) and \eqref{eq:problem} is considered in a bounded interval with homogeneous 
Neumann boundary conditions \cite{ELM11} or in $\R$ \cite{LMxx}, or when gravity forces are discarded ($a=b=c=0$) and \eqref{eq:problem} is considered in a bounded interval with no slip boundary conditions \cite{Mxx}. 
An important tool in the above mentioned works is the availability of two Liapunov functionals for \eqref{eq:problem}, one being the functional $\E$ defined by 
\begin{equation}\label{E}
\E(f,g):=\frac{1}{2}\int_{\R^2} \left[ (A-B)|\nabla f|^2+B|\nabla (f+ g)|^2+(a-b)f^2+b(f+g)^2 \right]\, dx
\end{equation}
for $(f,g)\in H^1(\R^2;\R^2)$ which decreases along the trajectories of \eqref{eq:problem} according to
\begin{align*}
\frac{d}{dt}\E(f,g) = -\int_{\R^2} \left[ f \left| \nabla\Delta(Af+Bg) - \nabla(af+bg) \right|^2 + B g \left| \nabla\Delta(f+g) - b \nabla(f+g) \right|^2 \right]\, dx\,.
\end{align*}

It turns out that there is  an underlying structure in \eqref{eq:problem} which allows us to view it as a gradient flow for the energy $\E$ defined in \eqref{E} with respect to the 
$2-$Wasserstein distance in $\mathcal{P}_2(\R^2)\times\mathcal{P}_2(\R^2)$. Recall that  $\mathcal{P}_2(\R^2)$ is the set of Borel probability measures on $\R^2$ with 
finite second moment and that, given two Borel probability measures $\mu$ and $\nu$ in $\mathcal{P}_2(\R^2),$ the $2-$Wasserstein distance $W_2(\mu,\nu)$ on $\mathcal{P}_2(\R^2)$ is defined by
$$
W_2^2(\mu,\nu):=\inf_{\pi\in\Pi(\mu,\nu)}\int_{\R^4} |x-y|^2d\pi(x,y)\,,
$$
where $\Pi(\mu,\nu)$  is the set of all probability measures $\pi\in \mathcal{P}(\R^4) $ which have marginals $\mu$ and $\nu,$ that is, $\pi[U\times \R^2]=\mu[U] $ and $\pi[\R^2\times U]=\nu[U]$ for all measurable subsets $U$ of $\R^2.$  

This gradient flow structure was actually uncovered in \cite{LMxx} in the simplified case where the capillary forces are neglected ($A=B=0$) and shown to provide a convenient setting to construct weak solutions to \eqref{eq:problem}. This is thus the approach we shall use in this paper to prove the existence of nonnegative weak solutions to \eqref{eq:problem}, showing additionally the convergence of the variational approximation. It is worth noticing at this point that, if $g=0$, the system \eqref{eq:problem} reduces to the single equation
$$
\p_t f=\mathrm{div}\left[f \left( -A \nabla\Delta f +a\nabla f \right)\right],
$$
which also has a gradient flow structure with respect to the $2-$Wasserstein distance as already shown 
in \cite{Ot98,Ot01} for $A=0$ and \cite{MMS09} for $a=0$. 
Let us recall that, since the 
pioneering works \cite{JKO98, Ot98, Ot01}, several parabolic equations have been interpreted as gradient flows for Wasserstein distances,
 including the Fokker-Planck equation \cite{JKO98}, the porous medium equation \cite{Ot01}, second order nonlocal and/or degenerate parabolic equations \cite{Ag05,AS08,BCC08}, 
some kinetic equations \cite{CG04,CMV06}, and fourth order degenerate parabolic equations \cite{LMS12,MMS09}. 
Besides \eqref{eq:problem} there does not seem to be  many systems of parabolic partial differential equations endowed with a similar structure with the exception 
of the parabolic-parabolic chemotaxis Keller-Segel system which has a mixed Wasserstein-$L_2$ gradient flow structure \cite{BLxx,CLxx}.

\medskip

Before stating the main result of this paper, we introduce some notations: let $\cK$ be the convex subset of $\mathcal{P}_2(\R^2)$ defined by
\begin{equation*}
\cK := \left\{ h\in L_1(\R^2, (1+|x|^2) dx) \cap H^1(\R^2)\,:\, h\ge 0 \text{ a.e. and } \|h\|_1=1\right\}\,,
\end{equation*}
and set $\cK_2:=\cK\times\cK.$ We next recall that the functional $H$ defined by
\begin{equation}\label{eq:H}
H(h):=\int_{\R^2} h\ln (h)\, dx 
\end{equation}
is well-defined for $h\in\cK,$ see Lemma~\ref{L:A2}.

The main result of this paper is the following:

\begin{thm}\label{MT}
Assume that \eqref{constants} is satisfied and let $(f_0,g_0)\in\cK_2.$
Given $\tau\in(0,1),$ we define  $(f_\tau^0,g_\tau^0):=(f_0,g_0)$ and consider for each $n\in\N$ the minimization problem
\begin{equation}\label{tilt}
\inf_{(u,v)\in\cK_2} \cF_\tau^n(u,v)\,,
\end{equation}
where 
\begin{equation*}
\cF_\tau^n(u,v) := \frac{1}{2\tau}\ \left[ W_2^2(u,f_\tau^n)+B\ W_2^2(v,g_\tau^n) \right] + \E(u,v)\,, \quad (u,v)\in\cK_2.
\end{equation*}
For each $n\in\N$, the minimization problem \eqref{tilt} has a solution $(f_\tau^{n+1},g_\tau^{n+1})$, which is unique if  $a\geq b$. Defining the interpolation functions $(f_\tau,g_\tau)$ by $(f_\tau,g_\tau)(t) := (f_\tau^{n},g_\tau^{n})$ for $t\in[n\tau,(n+1)\tau)$ and $n\in\N$,
there exist a sequence $\tau_k\searrow 0$ and functions $(f,g):[0,\infty)\to\cK_2$ such that
\begin{equation}\label{dem1}
\text{$(f_{\tau_k},g_{\tau_k})(t)\to (f,g)(t) $ \quad in \quad$L_2(\R^2;\R^2)$}
\end{equation}
for all $t\geq0.$ 
Moreover,
\begin{itemize}
\item[$(i)$] $(f,g)\in L_\infty(0,t;H^1(\R^2;\R^2))\cap L_2(0,t; H^2(\R^2;\R^2)),$
\item[$(ii)$] $(f,g)\in C([0,\infty), L_2(\R^2;\R^2))$ and $(f,g)(0)=(f_0,g_0),$
\end{itemize}
and the pair $(f,g)$ is a weak solution of \eqref{eq:problem} in the sense that
\begin{equation}\label{dem2}
\left\{
\begin{array}{lll}
&\displaystyle \int_{\R^2}(f(t)-f_0)\xi\, dx+ \int_0^t\int_{\R^2} \left[ \Delta(A f+B g)\ \mathrm{div}(f\nabla\xi) + f \nabla(a f+ b g)\cdot \nabla\xi \right] dx\, ds=0\\[2ex]
& \displaystyle \int_{\R^2}(g(t)-g_0)\xi\, dx+ \int_0^t\int_{\R^2} \left[ \Delta( f+ g)\ \mathrm{div}(g\nabla\xi) + c g \nabla(f+g)\cdot \nabla\xi \right] dx\,ds=0
\end{array}
\right.
\end{equation}
for all $t>0$ and $\xi\in\mathcal{C}_0^\infty(\R^2).$
Finally, $(f,g)$ satisfies the following estimates:
\begin{align}\label{dem3}
& H(f(t)) + B H(g(t)) + \int_{0}^t D_H(f(s),g(s))\, ds  \leq H(f_0) + B H(g_0)\,,\\
\label{dem4}
& \mathcal{E}(f(t),g(t)) + \frac{1}{2}\ \int_{0}^t \left( \|w_f(s)\|_2^2 + B \|w_g(s)\|_2^2 \right)\, ds\leq \mathcal{E}(f_0,g_0)
\end{align}
for all $t>0,$ where
\begin{equation*}
D_H(f,g) := (A-B) \|\Delta f\|^2_2 + B \|\Delta(f+g)\|^2_2 + (a-b) \|\nabla f\|^2_2 + b \|\nabla(f+g)\|^2_2\,,
\end{equation*}
and $w_f$ and $w_g$ are two vector fields in $L_2((0,\infty)\times\R^2;\R^2)$ defined as follows: introducing the vector fields $(j_f,j_g)$ defined by 
\begin{equation}
\begin{array}{cl}
j_f := & - \nabla(f \Delta(Af+Bg)) + \Delta(A f+B g) \nabla f + f \nabla(a 
 f + b g)  \\
 & \\
 j_g := & - \nabla(g \Delta(f+g)) + \Delta(f+g) \nabla g + c g \nabla(
f+g)
\end{array} \quad\text{in $\mathcal{D}'((0,\infty)\times\R^2;\R^2),$} \label{hop}
\end{equation}
 they actually both belong to $L_2(0,\infty;L_{4/3}(\R^2;\R^2))$ and 
\begin{equation}
j_f = \sqrt{f}\ w_ f \;\;\text{ and }\;\; j_g = \sqrt{g}\ w_g \;\;\text{ a.e. in }\;\; (0,\infty)\times\R^2\,. \label{hip}
\end{equation}
\end{thm}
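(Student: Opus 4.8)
The plan is to follow the by-now-standard minimizing-movement (JKO) scheme adapted to the coupled Wasserstein setting, with the crucial point being the identification of the Euler--Lagrange equations for the penalized functional $\cF_\tau^n$ and the passage to the limit $\tau\to0$.

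\textbf{Step 1: The variational scheme and basic estimates.} First I would establish that the infimum in \eqref{tilt} is attained: the functional $\cF_\tau^n$ is lower semicontinuous with respect to a suitable topology (weak $L_2$ convergence together with narrow convergence of measures) on $\cK_2$, since $\E$ is convex and lower semicontinuous in $H^1$ and the squared Wasserstein distances are lower semicontinuous under narrow convergence, while the sublevel sets are precompact because the $H^1$-bound from $\E$ controls the $L_2$-norm and the finite-second-moment condition is preserved. Uniqueness when $a\ge b$ follows because then $\E$ is strictly convex along linear interpolations in each variable, and the Wasserstein terms are convex along such interpolations (using that $(u,v)\mapsto W_2^2(u,f_\tau^n)$ need not be convex along linear interpolation in general, so here one must be careful — actually one uses that the map is convex along generalized geodesics, or one simply uses strict convexity of $\E$ together with convexity of $W_2^2$ along linear interpolations of the densities, which does hold). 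Iterating, one gets the a priori bounds
\begin{equation*}
\E(f_\tau^{n+1},g_\tau^{n+1}) + \frac{1}{2\tau}\left[W_2^2(f_\tau^{n+1},f_\tau^n) + B\,W_2^2(g_\tau^{n+1},g_\tau^n)\right] \le \E(f_\tau^n,g_\tau^n)\,,
\end{equation*}
which telescopes to give $\sup_n \E(f_\tau^n,g_\tau^n)\le \E(f_0,g_0)$ and $\sum_n [W_2^2(f_\tau^{n+1},f_\tau^n) + B W_2^2(g_\tau^{n+1},g_\tau^n)]\le 2\tau\,\E(f_0,g_0)$. Separately, testing the minimality of $(f_\tau^{n+1},g_\tau^{n+1})$ against displacements generated by the entropy $H$ — i.e.\ using the flow-interchange technique of Matthes--McCann--Savaré — yields the entropy estimate with the dissipation $D_H$, giving a uniform $L_2(0,T;H^2)$ bound. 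The $H^2$-bound is the key compactness gain over what $\E$ alone provides.

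\textbf{Step 2: Euler--Lagrange equations.} This is the technical heart. For a smooth compactly supported vector field $\zeta$ on $\R^2$, I perturb $(f_\tau^{n+1},g_\tau^{n+1})$ by the flow of $\zeta$ (push-forward) in each component, differentiate $\cF_\tau^n$ at the minimizer, and use the first-variation formula for $W_2^2$ (the optimal transport map appears) together with the first variation of $\E$. Since $\E$ involves $|\nabla f|^2$ and $|\nabla(f+g)|^2$, its first variation in the direction of transport by $\zeta$ produces terms like $\int \Delta(Af+Bg)\,\mathrm{div}(f\zeta)$ after integration by parts (here the $H^2$-regularity from Step 1 is exactly what makes these integrals meaningful). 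Combining, one obtains the discrete weak formulation
\begin{equation*}
\int_{\R^2} \frac{(\id - \mathbf{t}_f^{n+1})}{\tau}\cdot f_\tau^{n+1}\,\zeta\, dx = \int_{\R^2}\left[\Delta(Af_\tau^{n+1}+Bg_\tau^{n+1})\,\mathrm{div}(f_\tau^{n+1}\zeta) + f_\tau^{n+1}\nabla(af_\tau^{n+1}+bg_\tau^{n+1})\cdot\zeta\right]dx
\end{equation*}
and the analogous equation for $g$ with the factor $B$, where $\mathbf{t}_f^{n+1}$ is the optimal map from $f_\tau^{n+1}$ to $f_\tau^n$; the cancellation $cB=b$ from \eqref{constants} is what makes the two equations decouple into the stated clean form after dividing the $g$-equation by $B$. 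From this, defining $j_f^{n+1}$ as the right-hand side distributionally and using the $L_2(0,T;H^2)$ bound plus the Gagliardo--Nirenberg inequality $\|\nabla h\|_4 \lesssim \|h\|_{H^2}^{1/2}\|h\|_2^{1/2}$ together with $\|h\|_\infty$-type control, one checks $j_f, j_g\in L_2(0,\infty;L_{4/3})$ and that $j_f/\sqrt{f}\in L_2$ via the elementary bound $|j_f|^2/f \lesssim$ (terms controlled by the dissipation in \eqref{dem4}), which defines $w_f$.

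\textbf{Step 3: Compactness and passage to the limit.} Build the piecewise-constant interpolants $(f_\tau,g_\tau)$ and also a De Giorgi-type geodesic interpolant if needed for the time-continuity. The Wasserstein estimate $W_2^2((f_\tau,g_\tau)(t),(f_\tau,g_\tau)(s))\lesssim |t-s+\tau|\,\E(f_0,g_0)$ gives equicontinuity in $W_2$, hence (by a refined Arzel\`a--Ascoli / Aubin--Lions argument using the $H^2$-bound to upgrade narrow convergence to strong $L_2$) one extracts $\tau_k\to0$ with $(f_{\tau_k},g_{\tau_k})(t)\to (f,g)(t)$ in $L_2$ for every $t$, as in \eqref{dem1}, with $(f,g)\in L_\infty(0,T;H^1)\cap L_2(0,T;H^2)$ by lower semicontinuity and weak compactness. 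The continuity $(f,g)\in C([0,\infty);L_2)$ and the initial condition follow from the uniform $W_2$-modulus of continuity plus the strong $L_2$ bound. Passing to the limit in the discrete weak formulation of Step 2 requires: (a) linear terms pass by weak $L_2(0,T;H^2)$ convergence; (b) the nonlinear products $f_{\tau_k}\nabla\xi$, $f_{\tau_k}\Delta(\cdots)$ etc.\ pass using strong $L_2$ convergence of $f_{\tau_k}$ in space-time (from Aubin--Lions, the discrete $\partial_t$ being controlled in a negative Sobolev norm by the $W_2$-estimate) against weak $L_2$ convergence of the second-derivative terms; (c) the discrete time-derivative term $\int (f_\tau^{n+1}-f_\tau^n)\xi$ telescopes to $\int(f(t)-f_0)\xi$. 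This yields \eqref{dem2}. Finally \eqref{dem3} and \eqref{dem4} are obtained by passing to the limit (using lower semicontinuity of $\E$, $H$, and the dissipation functionals, plus weak convergence of $w_{f,\tau_k}\to w_f$) in the discretized versions of these inequalities accumulated along the scheme.

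\textbf{Main obstacle.} The delicate point is Step 2, the rigorous derivation of the Euler--Lagrange equations: because $\E$ depends on gradients, the first variation under transport is not just a potential term but involves $\Delta(Af+Bg)$, and one needs enough regularity — supplied precisely by the flow-interchange entropy estimate giving $f,g\in H^2$ — to justify the integrations by parts and to make sense of $j_f,j_g$ and the factorization \eqref{hip}. Ensuring that the $g$-equation cleanly decouples (dividing by $B$, using $cB=b$) and that all nonlinear terms have the right integrability to pass to the limit is where the structural assumptions \eqref{constants} are essential.
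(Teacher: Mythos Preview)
Your proposal is correct and follows essentially the same strategy as the paper: flow interchange with the heat semigroup (the $W_2$-gradient flow of $H$) for the $H^2$ regularity and entropy dissipation, push-forward perturbations $(\id+\e\zeta)\#f$ for the Euler--Lagrange equations, and an Arzel\`a--Ascoli argument (time equicontinuity in a negative Sobolev norm, interpolated against the uniform $H^1$ bound) for compactness. One small correction in Step~2: the $L_2$ bound on $w_f=j_f/\sqrt{f}$ is not obtained from a pointwise estimate $|j_f|^2/f\lesssim(\cdots)$ but by a duality/mollification argument from the Euler--Lagrange identity itself, yielding $\tau\|w_f\|_2\le W_2(f,f_0)$, and it is this estimate that then feeds into the telescoped energy inequality to produce \eqref{dem4}, not the other way around.
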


\medskip
Notice that Theorem~\ref{MT} provides not only the existence of a weak solution to \eqref{eq:problem} but also 
the convergence of subsequences of solutions to the variational scheme \eqref{tilt} to a solution to \eqref{eq:problem}. 
The proof of Theorem~\ref{MT} thus relies strongly on the study of the minimization problem \eqref{tilt} which is performed in Section~\ref{S:1}. 
The availability of the second Liapunov functional $H(f)+B H(g)$ (which is really a Liapunov functional only if $a\ge b$) plays an important role here as it allows us to show by an argument from \cite{MMS09} 
that the minimizers of \eqref{tilt} are actually in $H^2(\R^2)$.  
This additional regularity is actually at the heart of the identification of the Euler-Lagrange equation satisfied by the minimizers, see Section~\ref{Sec:2}. 
The convergence of the scheme and the existence of weak solutions to \eqref{eq:problem} are established in the last section, some care being needed to handle the fourth order terms. 
 Indeed, as indicated in \eqref{hop} and \eqref{hip}, due to the degenerate character of the equations, we have to  deal with  expressions that correspond to the fourth order terms in \eqref{Prob} and are not functions.

\begin{rem}
\begin{enumerate}
\item In contrast to the dissipation of the energy $\E$, the dissipation $D_H(f,g)$ of the functional $H(f)+B H(g)$ need not be nonnegative and this occurs when $a<b$, see \eqref{dem3}. 
It is yet unclear what information on the dynamics may be retrieved from \eqref{dem3} in that case.
\item For simplicity, we have restricted the analysis to initial data $(f_0,g_0)$ satisfying $\|f_0\|_1=\|g_0\|_1=1$. 
A similar result holds true for arbitrary nonnegative and integrable initial data $(f_0,g_0)\in H^1(\R^2;\R^2)$ with finite second moment and may be proved analogously to Theorem~\ref{MT}. Introducing   $(F,G):=(f/\|f_0\|_1,g/\|g_0\|_1)$, this new unknown  solves a system with the same structure as \eqref{eq:problem}, but with different parameters (depending on $\|f_0\|_1$ and $\|g_0\|_1$).
\item Theorem~\ref{MT} is also valid for the one dimensional version of \eqref{eq:problem}.
\end{enumerate}
\end{rem}

Throughout the paper we use the following notation: $x=(x_1,x_2)$ denotes a generic point of $\R^2$ and the partial derivative with respect to $x_i$ is denoted by $\partial_i$, $i=1,2$. 
For a sufficiently smooth function $h$, $\nabla h :=(\partial_1 h, \partial_2 h)$ denotes its gradient and $D^2 h := \left( \partial_i \partial_j h \right)_{1\le i, j\le 2}$ its Hessian matrix. 
Finally, $D\xi$ denotes the gradient of a vector field $\xi=(\xi_1,\xi_2)\in C^1(\R^2;\R^2)$ and is given by $D\xi := \left( \partial_i \xi_j \right)_{1\le i,j\le 2}$, 
 the divergence of $\xi$ being the trace of $D\xi$, that is, $\mathrm{div}(\xi)=\partial_1 \xi_1 + \partial_2 \xi_2$.

\section{The minimization problem}\label{S:1}

In this section, we show that, given $(f_0,g_0)\in\cK_2$ and $\tau>0$, the minimization problem 
\begin{equation}\label{Zmin}
\inf_{(u,v)\in\cK_2}\cF_\tau(u,v)
\end{equation}
with the  functional $\cF_\tau$ defined by
\begin{equation}\label{Zfunctional}
\cF_\tau(u,v) := \frac{1}{2\tau}\ \left(W_2^2(u,f_0)+B\ W_2^2(v,g_0) \right) + \E(u,v)\,, \quad (u,v)\in\cK_2\,,
\end{equation}
has at least one solution and we study the regularity of the minimizers. 
Since $a-b$ might be negative, which is the case when the more dense fluid lies on top of the less dense one, cf. \cite{EMxx}, the first step is to prove that $\E$ is bounded from below.

\begin{lemma}\label{L:0} The energy functional $\E$ satisfies
\begin{equation}\label{eq:3}
\E(f,g)\geq -C_1 +\frac{B}{2}\ \|\nabla (f+ g)\|^2_2 + \frac{A-B}{4}\ \|\nabla f\|^2_2\qquad\text{for all $(f,g)\in\cK_2,$}
\end{equation}
where $C_1$ is a positive constant depending only $A$, $B$, $a$, $b$, and $c$.
\end{lemma}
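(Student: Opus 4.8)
The starting point is to write out $\E$ from \eqref{E} as
\[
\E(f,g) = \frac{A-B}{2}\,\|\nabla f\|_2^2 + \frac{B}{2}\,\|\nabla(f+g)\|_2^2 + \frac{a-b}{2}\,\|f\|_2^2 + \frac{b}{2}\,\|f+g\|_2^2\,,
\]
and to observe that, by \eqref{constants}, every term on the right-hand side is nonnegative except possibly $\tfrac{a-b}{2}\|f\|_2^2$. Hence, when $a\ge b$, it suffices to discard the nonnegative terms $\tfrac{a-b}{2}\|f\|_2^2$ and $\tfrac{b}{2}\|f+g\|_2^2$ and to bound $\tfrac{A-B}{2}\|\nabla f\|_2^2\ge \tfrac{A-B}{4}\|\nabla f\|_2^2$, which gives \eqref{eq:3} with $C_1=0$.

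It thus remains to handle the case $a<b$, where the only negative contribution is $-\tfrac{b-a}{2}\|f\|_2^2$, to be absorbed into the good term $\tfrac{A-B}{2}\|\nabla f\|_2^2$. The key point is that $\|f\|_1=1$ for $f\in\cK$, which lets me invoke the Gagliardo--Nirenberg (equivalently, Nash) inequality in $\R^2$: there is a universal constant $C_{GN}>0$ such that $\|h\|_2^2\le C_{GN}\,\|\nabla h\|_2\,\|h\|_1$ for all $h\in H^1(\R^2)\cap L_1(\R^2)$, which applies since $\cK\subset H^1(\R^2)\cap L_1(\R^2)$. Applied to $f$, this gives $\|f\|_2^2\le C_{GN}\,\|\nabla f\|_2$, and Young's inequality then yields
\[
\frac{b-a}{2}\,\|f\|_2^2\le \frac{b-a}{2}\,C_{GN}\,\|\nabla f\|_2\le \frac{A-B}{4}\,\|\nabla f\|_2^2+C_1\,,\qquad C_1:=\frac{(b-a)^2\,C_{GN}^2}{4(A-B)}\,.
\]
Inserting this bound into the expression for $\E$, keeping the full term $\tfrac{B}{2}\|\nabla(f+g)\|_2^2$, and dropping $\tfrac{b}{2}\|f+g\|_2^2\ge 0$ produces \eqref{eq:3}, with $C_1$ depending only on $A$, $B$, $a$, $b$ (and the universal constant $C_{GN}$).

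The only step requiring genuine care is the application of the Gagliardo--Nirenberg/Nash inequality with the correct exponents and, crucially, the use of the mass normalization $\|f\|_1=1$ to turn $\|f\|_2^2$ into a sub-quadratic power of $\|\nabla f\|_2$; the remainder is a routine completion-of-the-square argument.
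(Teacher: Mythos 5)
Your proof is correct and follows essentially the same route as the paper: the same expansion of $2\E$, the same Gagliardo--Nirenberg/Nash bound $\|f\|_2^2\le C\,\|\nabla f\|_2$ exploiting $\|f\|_1=1$, and the same absorption of the resulting linear term into $\tfrac{A-B}{2}\|\nabla f\|_2^2$ via Young's inequality (the paper phrases this as completing the square, which is the same computation). The only cosmetic difference is that the paper treats both signs of $a-b$ at once using $(b-a)_+$ rather than splitting into cases.
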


\begin{proof}
According to the Gagliardo--Nirenberg--Sobolev inequality there is a positive constant $C_2>0$ such that 
\begin{equation}\label{eq:1}
 \|h\|_{2}\leq C_2\ \|h\|_1^{1/2}\ \|\nabla h\|_{2}^{1/2} \qquad \text{for all $h\in H^1(\R^2).$}
\end{equation}
By the definition of $\cK$, this inequality yields
\begin{equation}\label{eq:2}
\|h\|_2^2\leq C_2^2\ \|\nabla h\|_2\qquad \text{for all $h\in \cK.$}
\end{equation}
Owing to \eqref{eq:2} and Young's inequality, we find, for $(f,g)\in\cK_2$,
\begin{align*}
2\E(f,g)=& (A-B)\ \|\nabla f\|_2^2 + B\ \|\nabla (f+ g)\|_2^2 + (a-b)\ \|f\|_2^2 + b\ \|f+g\|_2^2 \nonumber\\
\geq&B\ \|\nabla (f+ g)\|^2_2 + (A-B)\ \|\nabla f\|_2^2 - C_2^2 (b-a)_+\ \|\nabla f\|_2 \nonumber\\
\geq&B\ \|\nabla (f+ g)\|^2_2 + \frac{A-B}{2}\ \|\nabla f\|_2^2 \nonumber\\
& + \frac{A-B}{2}\ \left( \|\nabla f\|_2 - \frac{C_2^2 (b-a)_+}{A-B}  \right)^2 - \frac{C_2^4 (b-a)_+^2}{2(A-B)} \,,
\end{align*}
whence \eqref{eq:3}. 
\end{proof}

We now show the existence of a minimizer to \eqref{Zmin}.

\begin{lemma}\label{Unimin}
 Given $(f_0,g_0)\in\cK_2$ and $\tau>0,$ there exists a  minimizer $(f,g)\in\cK_2$ of \eqref{Zmin} which is unique if $a\ge b$.
Moreover, $f$ and $g$ both belong to $H^2(\R^2)$ and 
\begin{equation}\label{Heat3}
\begin{aligned}
&(A-B)\ \|\Delta f\|_2^2 + B\ \|\Delta (f+g)\|_2^2 + (a-b)\ \|\nabla f\|_2^2+b\|\nabla (f+g)\|_2^2\\
&\leq\frac{1}{\tau}\ \left[ H(f_0)-H(f) +B\left( H(g_0)-H(g) \right) \right],
\end{aligned}
\end{equation}
the functional $H$ being defined in \eqref{eq:H}.
\end{lemma}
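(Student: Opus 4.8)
The plan is to follow the classical JKO strategy, using the lower bound from Lemma~\ref{L:0} to get existence and uniqueness of minimizers, and then to use the ``flow interchange'' trick of Matthes--McCann--Savar\'e (as in \cite{MMS09}) to extract the extra $H^2$-regularity encoded in \eqref{Heat3}.

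\medskip

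\emph{Existence and uniqueness of a minimizer.} First I would observe that $\cF_\tau$ is bounded below on $\cK_2$: indeed $W_2^2(u,f_0)$ and $W_2^2(v,g_0)$ are finite and nonnegative since $\cK\subset\cP_2(\R^2)$, and Lemma~\ref{L:0} gives $\E(u,v)\ge -C_1$, so $\inf\cF_\tau>-\infty$. Take a minimizing sequence $(u_j,v_j)$. From Lemma~\ref{L:0} the quantities $\|\nabla u_j\|_2$, $\|\nabla(u_j+v_j)\|_2$ (hence $\|\nabla v_j\|_2$) are bounded; combined with $\|u_j\|_1=\|v_j\|_1=1$ and \eqref{eq:2}, the sequences are bounded in $H^1(\R^2)$. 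Moreover $W_2^2(u_j,f_0)\le 2\tau\,\cF_\tau(u_j,v_j)+2\tau C_1$ is bounded, which together with boundedness of the second moment of $f_0$ bounds $\int |x|^2 u_j\,dx$ uniformly (and likewise for $v_j$); so the second moments are controlled, preventing escape of mass to infinity. Passing to a subsequence, $u_j\rightharpoonup f$ and $v_j\rightharpoonup g$ weakly in $H^1(\R^2)$ and, by Rellich on balls plus the second-moment tightness, strongly in $L_2(\R^2)$ and in $L_1(\R^2,(1+|x|^2)\,dx)$; thus $f,g\ge0$ a.e., $\|f\|_1=\|g\|_1=1$, and $(f,g)\in\cK_2$. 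The $W_2^2$ terms are lower semicontinuous with respect to narrow convergence (which follows from the $L_1$ convergence), and $\E$ is lower semicontinuous under weak $H^1$ convergence for the gradient terms and $L_2$ convergence for the zeroth-order terms; hence $\cF_\tau(f,g)\le\liminf\cF_\tau(u_j,v_j)=\inf\cF_\tau$, so $(f,g)$ is a minimizer. For uniqueness when $a\ge b$: with that sign condition every term in $\cF_\tau$ is convex along the natural interpolation --- $u\mapsto W_2^2(u,f_0)$ and $v\mapsto W_2^2(v,g_0)$ are strictly convex along generalized geodesics in $\cP_2(\R^2)$ (McCann), and $\E$ is convex (each of the four integrands is a nonnegative quadratic form in $(\nabla f,\nabla(f+g),f,f+g)$ since $A>B$, $a\ge b$, $b\ge0$), the convexity being along linear interpolation which is compatible; strict convexity of the Wasserstein terms then forces uniqueness of the minimizer.

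\medskip

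\emph{The $H^2$ bound via flow interchange.} Let $(f,g)$ be the minimizer. The idea is to compare $(f,g)$ with the image of $(f,g)$ under a short-time flow that decreases $H(f)+BH(g)$, namely the (componentwise, linear) heat semigroup: set $f_s:=e^{s\Delta}f$, $g_s:=e^{s\Delta}g$ for $s>0$. One checks that $(f_s,g_s)\in\cK_2$ for all $s>0$ (mass, nonnegativity and the $L_1((1+|x|^2)dx)\cap H^1$ membership are preserved by the heat flow). Since $(f,g)$ is a minimizer,
\begin{equation*}
\cF_\tau(f,g)\le\cF_\tau(f_s,g_s),\qquad\text{i.e.}\qquad \E(f,g)-\E(f_s,g_s)\le\frac{1}{2\tau}\bigl[W_2^2(f_s,f_0)+BW_2^2(g_s,g_0)-W_2^2(f,f_0)-BW_2^2(g,g_0)\bigr].
\end{equation*}
On the right-hand side, the heat flow is the Wasserstein gradient flow of $H$, so the metric-derivative / EDI estimate gives $\tfrac{1}{2}\tfrac{d}{ds}W_2^2(f_s,\cdot)\le H(f_0)-H(f_s)\le H(f_0)-H(f)$ (using that $s\mapsto H(f_s)$ is nonincreasing), and similarly for $g$; dividing by $s$ and letting $s\to0$ bounds $\limsup_{s\to0}\tfrac1s$ of the right side by $\tfrac1\tau[(H(f_0)-H(f))+B(H(g_0)-H(g))]$. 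On the left-hand side, $\tfrac{d}{ds}\E(f_s,g_s)$ is computed by differentiating \eqref{E}: since $\p_s f_s=\Delta f_s$ and $\p_s g_s=\Delta g_s$, integration by parts yields
\begin{equation*}
-\frac{d}{ds}\E(f_s,g_s)=(A-B)\|\Delta f_s\|_2^2+B\|\Delta(f_s+g_s)\|_2^2+(a-b)\|\nabla f_s\|_2^2+b\|\nabla(f_s+g_s)\|_2^2,
\end{equation*}
which is (up to the possibly negative $(a-b)$ term, controlled as in Lemma~\ref{L:0}) the quantity appearing on the left of \eqref{Heat3}. Hence $\liminf_{s\to0}\tfrac1s(\E(f,g)-\E(f_s,g_s))$ is at least the desired dissipation at $s=0$; making this rigorous requires the weak lower semicontinuity of $\|\Delta f_s\|_2^2$ etc. as $s\to0$ together with the a priori finiteness of these norms, which is exactly where the smoothing of the heat flow is used: for $s>0$ everything is smooth, and the resulting uniform bound on $\|\Delta f_s\|_2$, $\|\Delta(f_s+g_s)\|_2$ as $s\to0$ shows $f,g\in H^2(\R^2)$ and passes to the limit to give \eqref{Heat3}.

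\medskip

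The main obstacle, as usual in this flow-interchange argument, is the rigorous justification of the two one-sided derivative estimates at $s=0$: on the entropy side, establishing the inequality $\tfrac{1}{2}\bigl(W_2^2(f_s,f_0)-W_2^2(f,f_0)\bigr)\le s\,(H(f_0)-H(f))$ for the heat flow (the ``Evolution Variational Inequality'' for the entropy along the Wasserstein gradient flow) and checking that $H$ is finite along the flow (Lemma~\ref{L:A2}), and on the energy side, passing from the smooth-$s$ identity to a limiting inequality, which demands a lower-semicontinuity / Fatou argument for the second-order norms together with care about the sign-indefinite $(a-b)\|\nabla f_s\|_2^2$ term (absorbed using \eqref{eq:2} and the already-known $H^1$ bound on the minimizer). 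Once these two estimates are in hand, combining them gives \eqref{Heat3} and in particular $f,g\in H^2(\R^2)$.
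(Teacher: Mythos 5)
Your proposal follows essentially the same route as the paper: existence via a minimizing sequence, the lower bound of Lemma~\ref{L:0}, second-moment tightness and lower semicontinuity; and the $H^2$ bound \eqref{Heat3} via the flow-interchange argument with the heat semigroup, comparing $\cF_\tau(f,g)$ with $\cF_\tau(G_sf,G_sg)$, using the EVI for the entropy on the Wasserstein side and the explicit dissipation of $\E$ along the heat flow on the energy side, with the sign-indefinite $(a-b)$ term controlled by the monotonicity of $s\mapsto\|\nabla G_sf\|_2$. The one step I would flag is the uniqueness: you invoke strict convexity of $u\mapsto W_2^2(u,f_0)$ along \emph{generalized geodesics} while using convexity of $\E$ along \emph{linear} interpolation, and convexity along two different interpolating curves cannot be added to conclude strict convexity of the sum; the clean argument (and the one the paper uses) is that, for $a\ge b$, $\E$ is already strictly convex along linear interpolation in the convex set $\cK_2$ while $W_2^2(\cdot,f_0)$ is merely convex along linear interpolation, which suffices.
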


\begin{proof}
Pick a minimizing sequence $(u_k,v_k)_{k\ge 1}$ of $\cF_\tau$ in $\cK_2.$
Invoking \eqref{eq:3} and \eqref{eq:2} we see that
\begin{eqnarray}
\| u_k\|_{H^1} + \|v_k\|_{H^1} & \le & C\,, \quad k\ge 1\,, \label{eq:4} \\
W_2(u_k,f_0) + W_2(v_k,g_0) & \le & C\,, \quad k\ge 1\,, \label{eq:5} 
\end{eqnarray}
and the estimate \eqref{eq:5} implies that
\begin{equation}\label{eq:6}
\int_{\R^2}  (u_k+v_k)(x) (1+|x|^2)\, dx \le C\,, \qquad k\ge 1\,,
\end{equation}
by classical properties of the Wasserstein distance $W_2$, see, e.g., \cite[Eq.~(17)]{JKO98}. The compactness of the embedding of $H^1(\R^2)\cap L_1(\R^2;|x|^2\, dx)$ in $L_2(\R^2)$ 
(which is recalled in Lemma~\ref{le:comp} below) ensures that there are non-negative functions $f$ and $g$ in $H^1(\R^2)\cap L_1(\R^2, (1+|x|^2)\, dx) $ and a subsequence of $(u_k,v_k)_{k\ge 1}$ (not relabeled) having the property that
\begin{equation}\label{conv}
\begin{array}{cl}
(u_k,v_k) \to (f,g) & \text{ in $L_2(\R^2;\R^2)$ and a.e. in $\R^2$,}\\
& \\
(u_k,v_k) \rightharpoonup (f,g) & \text{ in $H^1(\R^2;\R^2).$}
\end{array}
\end{equation}
Furthermore, it readily follows  from \eqref{eq:4}, \eqref{eq:6}, and the Dunford-Pettis theorem that $(u_k)_{k\ge 1}$ and $(v_k)_{k\ge 1}$ are weakly sequentially compact in $L_1(\R^2)$. 
This property, together with \eqref{conv} and the Vitali theorem imply the strong convergence of $(u_k)_{k\ge 1}$ and $(v_k)_{k\ge 1}$ in $L_1(\R^2)$, namely,
\begin{equation}\label{conv2}
(u_k,v_k) \to (f,g) \quad \text{ in $L_1(\R^2;\R^2)$}.
\end{equation}
Since $(u_k,v_k)_{k\ge 1}$ belongs to $\cK_2$ for each $k\geq 1$, we conclude from \eqref{eq:6}, \eqref{conv}, and \eqref{conv2}  that $(f,g)\in\cK_2.$
That $(f,g)$ is a minimizer of $\cF_\tau$ in $\cK_2$ follows from \eqref{conv}, \eqref{conv2}, the weak lower semicontinuity of $\E$, and the fact that
 the $2$-Wasserstein metric $W_2$ is lower semicontinuous with respect to the narrow convergence of  probability measures in each of its arguments, the latter convergence
 being guaranteed by \eqref{conv2}. This completes the proof of the existence of a minimizer to $\cF_\tau$ in $\cK_2$.
 
Next, when $a\ge b,$ the uniqueness of the minimizer follows from the convexity of $\cK_2$ and $W_2^2$ and the strict convexity of  $\E$. 

We finish off the proof by showing that any minimizer $(f, g)$ of $\cF_\tau$ in $\cK_2$ actually belongs to $H^2(\R^2;\R^2),$ the proof relying on a technique 
developed in \cite{MMS09}. 
More precisely, denoting the heat semigroup by $(G_s)_{s\ge 0}$ which is defined by
$$
(G_s h)(x) := \frac{1}{4\pi s}\ \int_{\R^2} \exp{\left( - \frac{|x-y|^2}{4s} \right)}\ h(y)\, dy\,, \quad (s,x)\in [0,\infty)\times\R^2\,,
$$ 
for $h\in L_1(\R^2)$, classical properties of the heat semigroup ensure that $(G_s f, G_s g)\in\cK_2$ for all $s\ge 0$ since $(f,g)\in\cK_2.$ 
Consequently,  $\cF_\tau(f, g) \leq \cF_\tau(G_s f, G_s g)$ and we deduce that
\begin{equation}\label{Heat-1}
\E(f,g) - \E(G_s f,G_s g) \leq \frac{1}{2\tau}\left[\left(W_2^2(G_s f,f_0)-W_2^2(f,f_0)\right) + B\left(W_2^2(G_s g,g_0)-W_2^2(g,g_0)\right)\right]
\end{equation}
for all $s\ge 0.$ On the one hand, an explicit computation gives for $s>0$
\begin{align*}
\frac{d}{ds} \E(G_s f,G_s g)=&\int_{\R^2} \left[ (A-B)\ \nabla G_s f\ \p_s(\nabla G_s f) + B\ \nabla G_s (f+g)\ \p_s(\nabla G_s (f+g))\right.\\
&\left.\hspace{0.5cm} + (a-b)\ G_s f\ \p_s G_s f + b\ G_s (f+g)\ \p_s G_s(f+g) \right]\, dx\\
=&-(A-B)\ \|\Delta G_s f\|_2^2 - B\ \|\Delta G_s(f+g)\|_2^2\\
& - (a-b)\ \|\nabla G_s f\|_2^2 - b\ \|\nabla G_s(f+g)\|_2^2\,,
\end{align*}
which yields, after integration with respect to time,
\begin{align*}
&\frac{1}{s} \int_0^s \left[ (A-B)\ \|\Delta G_\sigma f\|_2^2 + B\ \|\Delta G_\sigma (f+g)\|_2^2 + (a-b)\ \|\nabla G_\sigma  f\|_2^2 + b\ \|\nabla G_\sigma (f+g)\|_2^2 \right] \, d\sigma \\
&=\frac{\E(f,g)-\E(G_s f,G_s g)}{s}\,.
\end{align*}
Since $\sigma\mapsto \|\Delta G_\sigma h\|_2$ and $\sigma\mapsto \|\nabla G_\sigma h\|_2$ are non-increasing functions for all $h\in L_1(\R^2)$, we end up with
\begin{equation}\label{Heat0}
\begin{aligned}
&(A-B)\ \|\Delta G_s f\|_2^2 + B\ \|\Delta G_s (f+g)\|_2^2 + (a-b)_+\ \|\nabla G_s f\|_2^2 + b\ \|\nabla G_s(f+g)\|_2^2 \\
&\leq  \frac{\E(f,g)-\E(G_s f,G_s g)}{s} + (b-a)_+\ \|\nabla f\|_2^2
\end{aligned}
\end{equation}
for all $s>0.$ 
On the other hand, since the heat equation is the gradient flow of the entropy functional $H$ defined by \eqref{eq:H} for the $2$-Wasserstein distance $W_2$, 
see, e.g., \cite{AGS08,JKO98,Ot01,Vi03}, it follows from \cite[Theorem~11.1.4]{AGS08} that, for all  $(h,\tilde{h})\in \cK_2,$
\begin{equation}\label{Heat}
\frac{1}{2}\frac{d}{ds}W_2^2(G_s h,\tilde{h}) + H(G_s h)\leq H(\tilde{h}) \qquad \text{for a.e. $s\geq0$}.
\end{equation}
With the choices $(h,\tilde{h})=(f,f_0)$ and $(h,\tilde{h})=(g,g_0)$ in \eqref{Heat}, we obtain 
$$
\frac{1}{2}\frac{d}{ds}\left[ W_2^2(G_s f,f_0)+B\ W_2^2(G_s g,g_0) \right] \leq H(f_0)-H(G_s f) +B\ \left( H(g_0)-H(G_s g) \right)
$$
for a.e. $s\ge 0.$ 
Integrating the above inequality with respect to time and using the time monotonicity of $s\mapsto H(G_s f)$ and $s\mapsto H(G_s g)$ lead us to
\begin{equation}\label{Heat1}
\begin{aligned}
&  \frac{1}{2s} \left[ W_2^2(G_s f,f_0) - W_2^2(f,f_0) +B\ \left( W_2^2(G_s  g,g_0) - W_2^2(g,g_0) \right) \right] \\
&  \leq \left[ H(f_0)-H(G_s f) + B\ \left( H(g_0)-H(G_s g) \right) \right]\,. 
\end{aligned}
\end{equation}
Combining \eqref{Heat-1}, \eqref{Heat0}, and \eqref{Heat1}, we find
\begin{equation}\label{Heat2}
\begin{aligned}
&(A-B)\ \|\Delta G_s f\|_2^2 + B\ \|\Delta G_s (f+g)\|_2^2 + (a-b)_+\ \|\nabla G_s f\|_2^2 + b\ \|\nabla G_s (f+g)\|_2^2\\
&\leq \frac{1}{\tau}\ \left[ H(f_0)-H(G_s f) + B\ \left( H(g_0) - H(G_s g) \right) \right] + (b-a)_+\ \|\nabla f\|_2^2
\end{aligned}
\end{equation}
for $s>0.$ Since $(f,g)\in \cK_2$, 
classical properties of the heat semigroup and the functional $H$ entail that $(H(G_s f),H(G_s g))$ converges towards $(H(f),H(g))$ as $s\to 0$. 
This convergence, \eqref{Heat2}, and the positivity of $A-B$ and $B$ readily imply that $(\Delta G_s f)_{s>0}$ and $(\Delta G_s (f+g))_{s>0}$ 
are bounded in $L_2(\R^2)$. Since they converge towards $\Delta f$ and $\Delta (f+g)$, respectively, in the sense of distributions as $s\to 0$,
 we deduce that both $\Delta f$ and $\Delta (f+g)$ belong to $L_2(\R^2)$ and $(\Delta G_s f,\Delta G_s (f+g))_{s>0}$ converges weakly to $(\Delta f,\Delta (f+g))$ in $L_2(\R^2;\R^2)$ as $s\to 0$. 
As a consequence, recalling that $(f,g)\in\cK_2$,  we conclude that $f$ and $f+g$ belong to $H^2(\R^2),$ and so does $g$. It remains to pass to the limit $s\to 0$ in \eqref{Heat2} to obtain \eqref{Heat3}.
\end{proof}

\section{The Euler-Lagrange equations}\label{Sec:2}

We now identify the Euler-Lagrange equation satisfied by the minimizers of the functional $\cF_\tau$, defined in \eqref{Zfunctional}, in $\cK_2$. 

\begin{lemma}\label{le:sup1}
Consider $(f_0,g_0)\in\cK_2$ and $\tau>0$. If $(f,g)$  is a minimizer of $\cF_\tau$ in $\cK_2$, it satisfies
\begin{equation}\label{A1}
\begin{aligned}
&\left| \int_{\R^2}(f-f_0)\xi\, dx + \tau\ \int_{\R^2} \left[\Delta(A f+B g)\ \mathrm{div}(f\nabla\xi) + f\ \nabla(a f+ b g)\cdot \nabla\xi\right] dx\right| \\
& \hspace{8cm} \leq \ \frac{\|D^2\xi\|_{\infty}}{2}\ W_2^2(f,f_0)
\end{aligned}
\end{equation}
and
\begin{equation}\label{A2}
\begin{aligned}
& \left| \int_{\R^2}(g-g_0)\xi\, dx + \tau\ \int_{\R^2} \left[ \Delta(f+ g)\ \mathrm{div}(g\nabla\xi) + c g\ \nabla(f+g)\cdot \nabla\xi\right] dx \right|\\ & \hspace{9cm} \leq \frac{\|D^2\xi\|_{\infty}}{2}\ W_2^2(g,g_0)
\end{aligned}
\end{equation}
for $\xi\in C_0^\infty(\R^2)$, where $D^2\xi$ denotes the Hessian matrix of $\xi.$
\end{lemma}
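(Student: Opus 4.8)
The strategy is the classical one for deriving Euler--Lagrange equations in Wasserstein gradient flows, going back to \cite{JKO98} and \cite{Ot01}: perturb the minimizer $(f,g)$ by pushing it forward along the flow of a smooth compactly supported vector field, expand $\cF_\tau$ to first order in the perturbation parameter, and use that the first variation is nonnegative to obtain an inequality; the opposite inequality comes from using $-\xi$ in place of $\xi$, whence the absolute value. Concretely, fix $\xi\in C_0^\infty(\R^2)$ and let $\Phi_\varepsilon:\R^2\to\R^2$ be the flow defined by $\partial_\varepsilon \Phi_\varepsilon(x)=\nabla\xi(\Phi_\varepsilon(x))$, $\Phi_0=\id$; for $|\varepsilon|$ small $\Phi_\varepsilon$ is a diffeomorphism. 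I would perturb only the first component, setting $f_\varepsilon:=(\Phi_\varepsilon)_\#(f\,dx)$, which, because $f\in H^2(\R^2)\subset H^1\cap L_1((1+|x|^2)dx)$ by Lemma~\ref{Unimin}, again lies in $\cK$ by the change-of-variables formula and the smoothness of $\Phi_\varepsilon$; the pair $(f_\varepsilon,g)\in\cK_2$ is an admissible competitor, so $\cF_\tau(f,g)\le\cF_\tau(f_\varepsilon,g)$, i.e. $0\le \frac{d}{d\varepsilon}\cF_\tau(f_\varepsilon,g)\big|_{\varepsilon=0^\pm}$ after dividing by $\pm\varepsilon$ and letting $\varepsilon\to0$.

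The computation splits into the Wasserstein term and the energy term. For the Wasserstein term one has the standard estimate (see \cite[proof of Thm.~5.1]{Ot01} or \cite[Lemma~8.2]{JKO98})
\[
W_2^2(f_\varepsilon,f_0)-W_2^2(f,f_0)\le 2\varepsilon\int_{\R^2}\nabla\xi(x)\cdot(x-T(x))\,f(x)\,dx+\varepsilon^2\|D^2\xi\|_\infty\,W_2^2(f,f_0)+o(\varepsilon),
\]
where $T$ is the optimal transport map from $f\,dx$ to $f_0\,dx$; the point is that composing the optimal plan for $(f,f_0)$ with $\Phi_\varepsilon$ gives an admissible (not necessarily optimal) plan for $(f_\varepsilon,f_0)$, and a second-order Taylor expansion of $|\Phi_\varepsilon(x)-y|^2$ produces exactly these terms, the quadratic one carrying the Hessian bound. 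The $W_2^2(v,g_0)$ term is untouched. For the energy, since $\E(f_\varepsilon,g)$ involves only $H^1$ and $L_2$ norms of $f_\varepsilon$ and $f_\varepsilon+g$, and $f_\varepsilon=(\det D\Phi_\varepsilon^{-1})\,(f\circ\Phi_\varepsilon^{-1})$ is a smooth-in-$\varepsilon$ curve in $L_2$ with $\partial_\varepsilon f_\varepsilon|_{\varepsilon=0}=-\mathrm{div}(f\nabla\xi)$, differentiating under the integral and using $f\in H^2(\R^2)$ to integrate by parts twice yields
\[
\frac{d}{d\varepsilon}\E(f_\varepsilon,g)\Big|_{\varepsilon=0}=\int_{\R^2}\big[\Delta(Af+Bg)\,\mathrm{div}(f\nabla\xi)+f\,\nabla(af+bg)\cdot\nabla\xi\big]\,dx
\]
(the $(A-B)|\nabla f|^2+B|\nabla(f+g)|^2$ part produces $-\Delta(Af+Bg)$ against $\partial_\varepsilon f_\varepsilon$, and the $(a-b)f^2+b(f+g)^2$ part produces $\nabla(af+bg)$; here $H^2$ regularity is exactly what makes $\Delta(Af+Bg)\cdot\mathrm{div}(f\nabla\xi)\in L_1$). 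Assembling, dividing by $2\tau$ times the Wasserstein piece and $1$ times the energy piece and sending $\varepsilon\to0^+$ gives $\int_{\R^2}\nabla\xi\cdot(x-T(x))f\,dx + 2\tau\int_{\R^2}[\cdots]\,dx\ge -\|D^2\xi\|_\infty\,\tau\,W_2^2(f,f_0)$... — rather, after rescaling, the clean inequality is that the quantity inside $|\cdot|$ in \eqref{A1} is bounded by $\tfrac12\|D^2\xi\|_\infty W_2^2(f,f_0)$ once one also recalls $\int_{\R^2}\nabla\xi\cdot(x-T(x))\,f\,dx=-\int_{\R^2}(f-f_0)\xi\,dx+O(\|D^2\xi\|_\infty W_2^2)$, which follows by Taylor-expanding $\xi(T(x))-\xi(x)$ and using $\int \xi\,df_0=\int\xi(T(x))f\,dx$. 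Repeating with $-\xi$ and perturbing instead the $g$-component (where the factor $B$ in front of $W_2^2(v,g_0)$ cancels against the $B$ hidden in $\Delta(f+g)$ via $cB=b$, producing the coefficient $c$ in front of $g\nabla(f+g)$) gives \eqref{A2}.

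The main obstacle is the rigorous justification of the differentiation of $\E(f_\varepsilon,g)$ and of the Wasserstein expansion, i.e. controlling the $o(\varepsilon)$ remainders uniformly; this is where the $H^2$ regularity from Lemma~\ref{Unimin} is indispensable, since without $\Delta f\in L_2$ the term $\Delta(Af+Bg)\,\mathrm{div}(f\nabla\xi)$ would not even be integrable and the second-order Taylor expansion of $\varepsilon\mapsto\|\nabla f_\varepsilon\|_2^2$ would not be legitimate. One must also check that $f_\varepsilon$ has finite second moment uniformly in $\varepsilon$ small — immediate since $\Phi_\varepsilon(x)=x+\varepsilon\nabla\xi(x)+O(\varepsilon^2)$ with $\nabla\xi$ bounded and compactly supported — so that $f_\varepsilon\in\cK$ and the competitor is genuinely admissible. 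The rest is bookkeeping: collecting the first-order terms, dividing by $\varepsilon$, and letting $\varepsilon\to0^\pm$.
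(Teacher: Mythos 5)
Your proposal is correct and follows essentially the same route as the paper: perturb the minimizer by pushing it forward along a compactly supported vector field, expand the Wasserstein term via the suboptimal coupling obtained by composing the optimal plan with the perturbation map, compute the first variation of $\E$ using the $H^2$ regularity from Lemma~\ref{Unimin}, use $\pm\xi$ to upgrade the inequality to the exact Euler--Lagrange identity, and finally Taylor-expand the test function along the transport map to produce the $\tfrac12\|D^2\xi\|_\infty W_2^2$ error term (the paper uses the affine perturbation $\id+\e\xi$ rather than the flow of $\nabla\xi$, but this is immaterial). Only watch the sign in your parenthetical relating $\int\nabla\xi\cdot(x-T(x))f\,dx$ to $\int(f-f_0)\xi\,dx$, which depends on the orientation convention for $T$ and appears flipped as written; this does not affect the argument.
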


\begin{proof}
By Brenier's theorem \cite[Theorem~2.12]{Vi03}, there are two measurable functions $T:\R^2\to\R^2$ and $S:\R^2\to\R^2$ such that $(f,g)=(T\#f_0,S\#g_0)$ and
\begin{equation}
W_2^2(f,f_0) = \int_{\R^2} |x-T(x)|^2\ f_0(x) dx \;\;\mbox{ and }\;\; W_2^2(g,g_0) = \int_{\R^2} |x-S(x)|^2\ g_0(x) dx\,. \label{new1}
\end{equation}
We pick now two test functions $\eta=(\eta_1,\eta_2)$ and $\xi=(\xi_1,\xi_2)$ in $C_0^\infty(\R^2;\R^2) $ and define 
\begin{equation}\label{eq:app2}
\begin{aligned}
(T_\e,S_\e) & := (\id+\e\xi,\id+\e\eta)\,, \\
(f_\e,g_\e) & := \big( (T_\e\circ T) \# f_0, (S_\e\circ S) \# g_0 \big) = \big( T_\e \# f, S_\e \# g \big)\,,
\end{aligned}
\end{equation}
for each $\e\in[0,1]$, where $\id$ is the identity function on $\R^2.$ Clearly, there is $\e_0\in (0,1)$ small enough
 (depending on both $\xi$ and $\eta$) such that, for $\e\in[0,\e_0],$ $T_\e$ and $S_\e$ are $C^\infty-$diffeomorphisms from $\R^2$ onto $\R^2$ with positive Jacobi determinants
\begin{equation}
\begin{aligned}
{\J_\e^\xi} & := \mathrm{det}(D T_\e) = 1 + \e\ \mathrm{div}(\xi) + \e^2\ \mathrm{det}(D\xi)\,, \\ {\J_\e^\eta} & := \mathrm{det}(D S_\e) = 1 + \e\ \mathrm{div}(\eta) + \e^2\ \mathrm{det}(D\eta)\,.
\end{aligned}
\label{eq:det}
\end{equation}
By \eqref{eq:app2}, we have the identities
\begin{equation}\label{eq:app} 
f_\e = \frac{f\circ T_\e^{-1}}{{\J_\e^\xi}\circ T_\e^{-1}} \quad\text{ and }\quad 
g_\e = \frac{g\circ S_\e^{-1}}{{\J_\e^\eta} \circ S_\e^{-1}},  \qquad \e\in (0,\e_0],
\end{equation}
from which we deduce that $f_\e$ and $g_\e$ both belong to $H^2(\R^2)$ and satisfy $\|f_\e\|_1=\|f\|_1=\|g_\e\|_1=\|g\|_1=1$.
 Additionally, we compute
\begin{equation}\label{2-1}
\|f_\e\|_2^2=\int_{\R^2} \frac{f^2(x)}{{\J_\e^\xi} (x)}\, dx 
\qquad\text{and}\qquad \|g_\e\|_2^2 = \int_{\R^2}\frac{g^2(x)}{{\J_\e^\eta}(x)}\, dx.
\end{equation}

For further use, we now study the behaviour of $(f_\e)_\e$ and $(g_\e)_\e$ as $\e\to 0$. To begin with, we notice that, given a test function $\vartheta\in C_0^\infty(\R^2)$, it follows from \eqref{eq:det} and \eqref{eq:app} that 
\begin{equation}
\lim_{\e\to 0} \int_{\R^2} f_\e\ \vartheta\ dx = \lim_{\e\to 0} \int_{\R^2} f\ \left( \vartheta\circ T_\e \right)\, dx = \int_{\R^2} f\ \vartheta\, dx \label{tintin}
\end{equation}
and
\begin{equation}
\lim_{\e\to 0} \int_{\R^2} \frac{f_\e-f}{\e}\ \vartheta\ dx = \lim_{\e\to 0} \int_{\R^2} f\ \frac{\vartheta\circ T_\e-\vartheta}{\e}\ dx = \int_{\R^2} f\ \nabla\vartheta\cdot\xi \ dx\,, \label{milou}
\end{equation}
while \eqref{eq:det}, \eqref{2-1}, and the Lebesgue dominated convergence theorem entail that
\begin{equation}
\lim_{\e\to 0} \|f_\e\|_2^2 = \|f\|_2^2\,. \label{haddock}
\end{equation}
Thanks to \eqref{tintin} and \eqref{haddock} on the one hand and to \eqref{milou} and Lemma~\ref{L:2} on the other hand, we conclude that
\begin{equation}
f_\e \rightarrow f \;\mbox{ in }\; L_2(\R^2) \;\;\mbox{ and }\;\; \frac{f_\e-f}{\e} \rightharpoonup - \mathrm{div}(f \xi) \;\mbox{ in }\; L_2(\R^2)\,. \label{eq:cvfe}
\end{equation}
Similarly, we get
\begin{equation}
g_\e \rightarrow g \;\mbox{ in }\; L_2(\R^2) \;\;\mbox{ and }\;\; \frac{g_\e-g}{\e} \rightharpoonup - \mathrm{div}(g \eta) \;\mbox{ in }\; L_2(\R^2)\,. \label{eq:cvge}
\end{equation}

We next turn to the convergence properties of $(\nabla f_\e)_\e$ and $(\nabla g_\e)_\e$. Differentiating \eqref{eq:app} and using \eqref{eq:det}, we find
\begin{equation}
\nabla f_\e\circ T_\e = \frac{1}{(\J_\e^\xi)^2}\ \nabla f + \e\ V_\e(f,\xi) - \e^2\ \frac{f}{(\J_\e^\xi)^3}\ R_\e(\xi)\,, \label{d3}
\end{equation}
where $V_\e(f,\xi) := \left( V_{1,\e}(f,\xi),V_{2,\e}(f,\xi) \right)$, $R_\e(\xi) := \left( R_{1,\e}(\xi),R_{2,\e}(\xi) \right)$, 
\begin{align*}
V_{1,\e}(f,\xi) & := \frac{\nabla f \cdot \left( \partial_2 \xi_2 , -\partial_1 \xi_2 \right)}{(\J_\e^\xi)^2} - \frac{f\ \partial_1\mathrm{div}(\xi)}{(\J_\e^\xi)^3}\,, \\
V_{2,\e}(f,\xi) & := \frac{\nabla f \cdot \left( -\partial_2 \xi_1 , \partial_1 \xi_1 \right)}{(\J_\e^\xi)^2} - \frac{f\ \partial_2\mathrm{div}(\xi)}{(\J_\e^\xi)^3}\,, 
\end{align*}
and
\begin{align*}
R_{1,\e}(\xi) & := \partial_1\mathrm{det}(D\xi) + \partial_1\mathrm{div}(\xi)\ \partial_2 \xi_2 - \partial_2\mathrm{div}(\xi)\ \partial_1 \xi_2 + \e \left( \partial_1\mathrm{det}(D\xi)\ \partial_2 \xi_2 - \partial_2\mathrm{det}(D\xi)\ \partial_1 \xi_2 \right)\,, \\
R_{2,\e}(\xi) & := \partial_2\mathrm{det}(D\xi) + \partial_2\mathrm{div}(\xi)\ \partial_1 \xi_1 - \partial_1\mathrm{div}(\xi)\ \partial_2 \xi_1 + \e \left( \partial_2\mathrm{det}(D\xi)\ \partial_1 \xi_1 - \partial_1\mathrm{det}(D\xi)\ \partial_2 \xi_1 \right)\,.
\end{align*}
Let us now draw several consequences of \eqref{d3}: first, we have 
$$
\|\nabla f_\e\|_2^2 = \int_{\R^2} \left| \nabla f_\e \circ T_\e \right|^2\ \J_\e^\xi\ dx \,,
$$
and, since $f\in H^1(\R^2)$ and $\J_\e^\xi\longrightarrow 1$ in $L_\infty(\R^2)$ by \eqref{eq:det}, it readily follows from \eqref{d3} and the previous identity that
\begin{equation}
\lim_{\e\to 0} \| \nabla f_\e \|_2^2 = \|\nabla f\|_2^2\,. \label{d4}
\end{equation}
In particular, $(\nabla f_\e)_\e$ is bounded in $L_2(\R^2;\R^2)$ and, since $f_\e\to f$ in $L_2(\R^2)$ by \eqref{eq:cvfe}, we conclude that $(\nabla f_\e)_\e$ converges weakly towards $\nabla f$ in $L_2(\R^2;\R^2)$. This convergence and \eqref{d4} then guarantee that
\begin{equation}
\nabla f_\e \to \nabla f \;\;\mbox{ in }\;\; L_2(\R^2;\R^2)\,. \label{d5}
\end{equation} 
Next, owing to \eqref{d3}, we have
\begin{align*}
\frac{1}{\e}\ \nabla(f_\e - f) = & \left[ \frac{1 - \J_\e^\xi \circ T_\e^{-1}}{\e} \right]\ \frac{\nabla f\circ T_\e^{-1}}{\left( \J_\e^\xi\circ T_\e^{-1} \right)^2} + \frac{1}{\e}\ \left[ \frac{\nabla f\circ T_\e^{-1}}{\J_\e^\xi\circ T_\e^{-1}} - \nabla f \right] \\ 
& + V_\e(f,\xi)\circ T_\e^{-1} - \e\ \frac{f\circ T_\e^{-1}}{\left( \J_\e^\xi\circ T_\e^{-1}\right)^3}\ R_\e(\xi)\circ T_\e^{-1}\,.
\end{align*}
The properties of $\J_\e^\xi$, $T_\e$, $f$, and the definitions of $V_\e(f,\xi)$ and $R_\e(\xi)$ readily ensure that the first, third and fourth terms on the right-hand side of the above identity are
 bounded in $L_2(\R^2;\R^2)$ while the boundedness in $L_2(\R^2;\R^2)$ of the second one follows from Lemma~\ref{L:2} since $f\in H^2(\R^2)$ by Lemma~\ref{Unimin}. 
Consequently, $(\nabla(f_\e-f)/\e)_\e$ is bounded in $L_2(\R^2;\R^2)$ and, recalling that $((f_\e-f)/\e)_\e$ converges weakly to $-\mathrm{div}(f\xi)$ in $L_2(\R^2;\R^2)$ by \eqref{eq:cvfe}, we conclude that
\begin{equation}
\frac{\nabla(f_\e-f)}{\e} \rightharpoonup - \nabla\mathrm{div}(f\xi) \;\;\mbox{ in }\;\; L_2(\R^2;\R^2)\,. \label{d6}
\end{equation}
Similar computations give
\begin{equation}
\nabla g_\e \to \nabla g \;\mbox{ in }\; L_2(\R^2;\R^2) \;\;\mbox{ and }\;\; \frac{\nabla(g_\e-g)}{\e} \rightharpoonup - \nabla\mathrm{div}(g\eta) \;\mbox{ in }\; L_2(\R^2;\R^2)\,. \label{d7}
\end{equation}

After this preparation, we can start the proof of \eqref{A1} and \eqref{A2}. Recalling that $(f_\e,g_\e)\in\cK_2$ for all $\e\in(0,\e_0]$, the minimizing property of  $(f,g)$ entails that $\cF_\tau(f,g) \le \cF_\tau(f_\e,g_\e)$, that is,
\begin{equation}\label{inequ}
0 \le \frac{1}{2\tau}\ \left[ W_2^2(f_\e,f_0) - W_2^2(f,f_0) + B\left( W_2^2(g_\e,g_0) - W_2^2(g,g_0) \right) \right] + \E(f_\e,g_\e) - \E(f,g).
\end{equation}
Since $(T_\e\circ T)\#f_0=f_\e$ by \eqref{eq:app2}, we infer from \eqref{new1}  that
\begin{equation*}
W_2^2(f_\e,f_0) \leq  W_2^2(f,f_0) - 2 \e \int_\R (\id-T)\cdot (\xi\circ T)\ f_0\, dx + \e^2 \int_\R |\xi\circ T|^2 f_0\, dx.
\end{equation*}
A similar inequality being valid with $(g_\e,g_0,g,S_\e,S,\eta)$ instead of  $(f_\e,f_0,f,T_\e,T,\xi)$, we conclude that
\begin{equation}\label{EQ1}
\begin{aligned}
 &\limsup_{\e\to0} \left\{ \frac{1}{2\e} \left[ W_2^2(f_\e,f_0) - W_2^2(f,f_0) + B \left( W_2^2(g_\e,g_0) - W_2^2(g,g_0) \right) \right] \right\} \\
&\leq - \left[ \int_{\R^2} (\id-T)\cdot (\xi\circ T)\ f_0\, dx + B \int_{\R^2} (\id-S)\cdot (\eta\circ S)\ g_0\, dx\right].
\end{aligned}
\end{equation}

We next show that
\begin{equation}\label{EQ2}
\begin{aligned}
& \lim_{\e\to 0} \frac{1}{2\e} \left[ (a-b)\ \|f_\e\|_2^2 + b\ \|f_\e+g_\e\|_2^2 - (a-b)\ \|f\|_2^2 - b\ \|f+g\|_2^2 \right] \\
&=-\int_{\R^2} \left[ a\ \frac{f^2}{2}\ \mathrm{div}(\xi) + b\ \frac{g^2}{2}\ \mathrm{div}(\eta) + b\ f \ \mathrm{div}(g \eta ) + b\ g \ \mathrm{div}(f \xi ) \right]\, dx\,.
\end{aligned}
\end{equation}
Indeed, we write
$$
(a-b)\ \|f_\e\|_2^2 + b\ \|f_\e+g_\e\|_2^2 - (a-b)\ \|f\|_2^2 - b\ \|f+g\|_2^2=a\ I_1^\e + b\ I_2^\e + b\ I_3^\e,
$$
with
$$
I_1^\e := \|f_\e\|_2^2 - \|f\|_2^2\, , \quad I_2^\e := \|g_\e\|_2^2 - \|g\|_2^2\,,\quad  I_3^\e := 2 \int_{\R^2} \left( f_\e\ g_\e - f\ g \right)(x)\, dx.
$$
It readily follows from \eqref{eq:cvfe} that
\begin{equation}\label{EQ:21}
\lim_{\e\to 0} \frac{I_1^\e}{2\e} = \lim_{\e\to 0} \int_{\R^2} \frac{(f_\e+f)}{2}\ \frac{(f_\e-f)}{\e}\, dx = - \int_{\R^2} f\ \mathrm{div}(f\xi)\, dx = -\frac{1}{2} \int_{\R^2} f^2 \ \mathrm{div}(\xi)\, dx\,.
\end{equation}
Similarly, \eqref{eq:cvge} guarantees that
\begin{equation}\label{EQ:22}
\lim_{\e\to 0} \frac{I_2^\e}{2\e} = \lim_{\e\to 0} \int_{\R^2} \frac{(g_\e+g)}{2}\ \frac{(g_\e-g)}{\e}\, dx = -\frac{1}{2} \int_{\R^2} g^2\ \mathrm{div}(\eta)\, dx. 
\end{equation}
We next write $I_3^{\e}$ as
$$
I_3^\e := \int_{\R^2} \left[ (f_\e+f)\ (g_\e-g) + (g_\e+g)\ (f_\e-f) \right]\, dx\,,
$$
and deduce from \eqref{eq:cvfe} and \eqref{eq:cvge} that 
\begin{equation}\label{EQ:23}
\lim_{\e\to 0} \frac{I_3^\e}{2\e} = - \int_{\R^2} \left[ f\ \mathrm{div}(g\eta) + g\ \mathrm{div}(f\xi) \right]\, dx.
\end{equation}
Combining \eqref{EQ:21}, \eqref{EQ:22}, and \eqref{EQ:23} gives the claim \eqref{EQ2}.

Finally, we show that
\begin{equation}\label{EQ3}
\begin{aligned}
 &\lim_{\e\to 0} \frac{1}{2\e} \left[ (A-B)\ \|\nabla f_\e\|_2^2 + B\ \|\nabla (f_\e+g_\e) \|_2^2 - (A-B)\ \|\nabla f\|_2^2 - B\ \|\nabla (f+g)\|_2^2 \right] \\
&=\int_{\R^2} \left[ (A\Delta f+ B\Delta g)\ \mathrm{div}(f\xi) + B \Delta(f+g)\ \mathrm{div}(g\eta) \right]\, dx.
\end{aligned}
\end{equation}
To this end, we write
$$
(A-B)\ \|\nabla f_\e\|_2^2 + B\ \|\nabla(f_\e+g_\e)\|_2^2 - (A-B)\ \|\nabla f\|_2^2 - B\ \|\nabla(f+g)\|_2^2 = A\ L_1^\e + B\ L_2^\e + B\ L_3^\e,
$$
with
$$
L_1^\e := \|\nabla f_\e\|_2^2 - \|\nabla f\|_2^2\,, \quad L_2^\e := \|\nabla g_\e\|_2^2 - \|\nabla g\|_2^2\,, \quad L_3^\e := 2\ \int_{\R^2} \left( \nabla f_\e\cdot \nabla g_\e - \nabla f\cdot \nabla g \right)\, dx.
$$
Thanks to \eqref{d5} and \eqref{d6}, we have
\begin{equation}
\begin{aligned}
\lim_{\e\to 0} \frac{L_1^\e}{2\e} = &\lim_{\e\to 0} \int_{\R^2} \frac{\nabla(f+f_\e)}{2}\cdot \frac{\nabla(f_\e-f)}{\e}\, dx \\
= & - \int_{\R^2} \nabla f\cdot \nabla\mathrm{div}(f\xi) \, dx = \int_{\R^2}\Delta f\ \mathrm{div}(f\xi)\, dx\,,
\end{aligned}\label{d8}
\end{equation}
and similarly, by \eqref{d7},
\begin{equation}
 \lim_{\e\to 0} \frac{L_2^\e}{2\e} = \lim_{\e\to 0} \int_{\R^2} \frac{\nabla(g+g_\e)}{2}\cdot \frac{\nabla(g_\e-g)}{\e}\, dx = \int_{\R^2}\Delta g\ \mathrm{div}(g\eta)\, dx. \label{d9}
\end{equation}
Finally,
$$
L_3^\e = \int_{\R^2} \left[ \nabla(f_\e+f)\cdot \nabla(g_\e-g) + \nabla(f_\e-f)\cdot \nabla(g_\e+g) \right]\, dx\,,
$$
and we infer from \eqref{d5}, \eqref{d6}, and \eqref{d7} that 
\begin{equation}
\begin{aligned}
\lim_{\e\to 0} \frac{L_3^\e}{2\e} =& - \int_{\R^2} \left[ \nabla f\cdot \nabla\mathrm{div}(g\eta) + \nabla\mathrm{div}(f\xi)\cdot \nabla g \right]\, dx\\
=& \int_{\R^2} \left[ \Delta f\ \mathrm{div}(g\eta) + \Delta g\ \mathrm{div}(f\xi) \right]\, dx.
\end{aligned}\label{d10}
\end{equation}
Combining \eqref{d8}, \eqref{d9}, and \eqref{d10} gives the claim \eqref{EQ3}.

We now divide \eqref{inequ} by $\e$ and take the limsup as $\e\to 0$, using \eqref{EQ1}, \eqref{EQ2}, and \eqref{EQ3}. The resulting inequality being also valid for $(-\xi,-\eta),$ we obtain, by choosing successively $\xi=0$ and $\eta=0$, that
\begin{align}\label{R1}
&\frac{1}{\tau} \int_{\R^2} (\id-T)\cdot \xi\circ T\ f_0\, dx = \int_{\R^2} \left[ \Delta(A f + B g)\ \mathrm{div}(f\xi) + f\ \nabla(a f + b g)\cdot \xi \right]\, dx\,, \\ \label{R2}
&\frac{1}{\tau} \int_{\R^2} (\id-S)\cdot \eta\circ S\ g_0\, dx = \int_{\R^2} \left[ \Delta(f + g)\ \mathrm{div}(g\xi) + c\ f\ \nabla(f + g)\cdot \xi \right]\, dx\,.
\end{align}
Consider finally $\Xi\in C_0^\infty(\R^2)$ and take $\xi=\nabla \Xi$ in \eqref{R1}. Since 
$$
|\Xi(x)-\Xi(T(x))-\nabla\Xi(T(x))\cdot (x-T(x))| \le \frac{\|D^2\Xi\|_\infty\ |x-T(x)|^2}{2}
$$
for all $x\in\R^2$ by the mean value theorem, we find after multiplying the above relation by $f_0$ and thereafter integrating over $\R^2$ and using \eqref{new1} that
\begin{align*}
\left|\int_{\R^2}\left[\Xi(x)-\Xi(T(x))-\nabla\Xi(T(x))\cdot (x-T(x))\right]\ f_0(x)\, dx\right|&\leq\frac{\|D^2\Xi \|_\infty W^2_2(f,f_0)}{2}.
\end{align*}
Combining the above inequality with \eqref{R1} gives \eqref{A1}. 
The inequality \eqref{A2} next follows from \eqref{R2} in a similar way.
\end{proof}

In order to present the next result, we introduce first some notations.
Given a nonnegative and continuous function $h$ and $\delta>0,$ we define the open sets $\mathcal{P}_\delta^h$ and $\mathcal{P}^h$ by
$$
\mathcal{P}_\delta^h := \{x\in\R^2\ :\ h(x)>\delta\} \quad\text{ and }\quad \mathcal{P}^h := \bigcup_{\delta>0} \mathcal{P}_\delta^h\,.
$$

\begin{lemma}\label{le:h3} Given $(f_0,g_0)\in\cK_2$ and $\tau>0$, any minimizer $(f,g)$ of $\mathcal{F}_\tau$ in $\cK_2$ is such that $Af+Bg \in H^3_{\text{loc}}(\mathcal {P}^f)$ and $f+g\in H^3_{\text{loc}}(\mathcal {P}^g).$ Moreover, the functions $j_f$, $w_f$, $j_g$, and $w_g$ defined by  
\begin{equation}\label{QEf}
w_f := \left\{ 
\begin{array}{cl}
\sqrt{f}\ \left( -\nabla\Delta(A f+B g) + \nabla(a  f+b g) \right) & \;\;\text{a.e. in }\;\; \mathcal {P}^f\,, \\
0 & \;\;\text{a.e. in }\;\; \R^2\setminus\mathcal {P}^f\,,
\end{array}
\right. \,, \qquad j_f := \sqrt{f}\ w_f\,,
\end{equation}
and
\begin{equation}\label{QEg}
w_g:= \left\{ 
\begin{array}{cl}
\sqrt{g}\ \left( -\nabla\Delta( f+ g) + c\nabla(  f+ g)\right) & \;\;\text{a.e. in }\;\; \mathcal {P}^g\,, \\
0 & \;\;\text{a.e. in}\;\; \R^2\setminus\mathcal{P}^g\,,
\end{array}\right. \,, \qquad j_g := \sqrt{g}\ w_g\,,
\end{equation}
belong to $L_2(\R^2;\R^2)$ and satisfy
\begin{align}\label{jef1}
&\int_{\R^2} \left[ \Delta(A f+B g)\ \mathrm{div}(f\xi) + f\ \nabla(a  f+b g)\cdot \xi \right]\, dx = \int_{\R^2} j_f\cdot \xi\, dx, \\
\label{jef2}
&\int_{\R^2} \left[ \Delta( f+ g)\ \mathrm{div}(g\xi) + c\ g\ \nabla( f+ g)\cdot \xi \right]\, dx = \int_{\R^2} j_g\cdot\xi\, dx,
\end{align}
for all $\xi\in C_0^\infty(\R^2;\R^2).$ In addition, we have the following estimates:
\begin{equation}
\label{C1}
\tau\left\|w_f \right\|_{2}\leq W_2(f,f_0)\qquad\text{and} \qquad \tau\left\|w_g \right\|_{2}\leq W_2(g,g_0).
\end{equation}
\end{lemma}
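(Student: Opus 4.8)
The plan is to upgrade the integral inequalities \eqref{A1}--\eqref{A2} obtained in Lemma~\ref{le:sup1} into the weak-form identities \eqref{jef1}--\eqref{jef2} by a local elliptic regularity argument on the positivity sets, and then to identify the resulting fluxes with $\sqrt f\, w_f$ and $\sqrt g\, w_g$. First, fix $\xi=\nabla\Xi$ with $\Xi\in C_0^\infty(\R^2)$ supported in a ball contained in $\mathcal{P}_\delta^f$ for some $\delta>0$; on such a set $f\ge\delta>0$, and since $Af+Bg\in H^2(\R^2)$ by Lemma~\ref{Unimin}, the left-hand side of \eqref{A1} defines a bounded linear functional of $\Xi$. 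Using that the term $\|D^2\xi\|_\infty=\|D^3\Xi\|_\infty$ does \emph{not} appear once we exploit the $(-\xi,-\eta)$-symmetry already used to derive \eqref{R1}, I would instead start directly from \eqref{R1}: for $\xi\in C_0^\infty(\mathcal{P}_\delta^f;\R^2)$ the right-hand side of \eqref{R1} equals a linear functional in $\xi$ bounded by $C(\delta,\Xi\text{-support})\,\|\xi\|_2$ because $\frac1\tau\int(\mathrm{id}-T)\cdot(\xi\circ T)f_0\,dx$ is controlled by $\tau^{-1}W_2(f,f_0)\|\xi\|_{L_2(f_0)}$ after the change of variables $y=T(x)$, and $\|\xi\circ T\|_{L_2(f_0)}^2=\int|\xi|^2 f\,dx\le \delta^{-1}\|f\|_1^{1/2}\cdots$ — more simply, $\int|\xi|^2 f\,dx\le \|f\|_\infty\|\xi\|_2^2$ with $\|f\|_\infty<\infty$ since $f\in H^2(\R^2)\hookrightarrow C_0(\R^2)$. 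Hence $\xi\mapsto\int_{\R^2}[\Delta(Af+Bg)\,\mathrm{div}(f\xi)+f\nabla(af+bg)\cdot\xi]\,dx$ extends to a bounded functional on $L_2(\mathcal{P}_\delta^f;\R^2)$.

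Next I would run elliptic bootstrap to get the claimed $H^3_{\mathrm{loc}}$ regularity. Writing $u:=Af+Bg\in H^2$, one has $\mathrm{div}(f\xi)=f\,\mathrm{div}\xi+\nabla f\cdot\xi$, so integrating by parts (legitimate since $u\in H^2$, $f\in H^2$, $\xi\in C_0^\infty$ of a set where $f$ is bounded below) the functional becomes $\int(-\nabla(f\Delta u)+\text{lower order})\cdot\xi\,dx$; its boundedness in $L_2$ against $\xi$ forces $\nabla(f\Delta u)\in L_{2,\mathrm{loc}}(\mathcal{P}^f)$, and since $f\ge\delta$ there locally and $f\in H^2\hookrightarrow W^{1,p}_{\mathrm{loc}}$ for all $p<\infty$, this yields $\Delta u\in H^1_{\mathrm{loc}}(\mathcal{P}^f)$, i.e. $u=Af+Bg\in H^3_{\mathrm{loc}}(\mathcal{P}^f)$; symmetrically $f+g\in H^3_{\mathrm{loc}}(\mathcal{P}^g)$. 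With this regularity in hand, a further integration by parts on $\mathcal{P}_\delta^f$ rewrites the functional as $\int_{\mathcal{P}_\delta^f} f\big(-\nabla\Delta(Af+Bg)+\nabla(af+bg)\big)\cdot\xi\,dx=\int \sqrt f\, w_f\cdot\xi\,dx=\int j_f\cdot\xi\,dx$, which is \eqref{jef1} for $\xi$ supported in any $\mathcal{P}_\delta^f$; the extension to all $\xi\in C_0^\infty(\R^2;\R^2)$ uses that $f=0$ a.e. on $\R^2\setminus\mathcal{P}^f$ (so $f\Delta(Af+Bg)$ and $f\nabla f$ vanish a.e. there, $f$ being in $H^2$ with $\nabla f=0$ a.e. on $\{f=0\}$) together with a partition-of-unity/cutoff exhausting $\mathcal{P}^f$. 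The analogous computation gives \eqref{jef2}.

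It remains to prove the $L_2$-membership of $w_f,w_g$ and the estimates \eqref{C1}. For this I would take $\xi$ smooth, compactly supported in $\mathcal{P}_\delta^f$, with $\|\xi\|_2\le 1$; from \eqref{R1} and the Cauchy--Schwarz estimate $\big|\frac1\tau\int(\mathrm{id}-T)\cdot(\xi\circ T)f_0\,dx\big|\le\frac1\tau\big(\int|\mathrm{id}-T|^2 f_0\,dx\big)^{1/2}\big(\int|\xi\circ T|^2 f_0\,dx\big)^{1/2}=\frac1\tau W_2(f,f_0)\big(\int_{\mathcal{P}_\delta^f}|\xi|^2 f\,dx\big)^{1/2}\le \frac1\tau W_2(f,f_0)\|\xi\|_2$, while the right-hand side, after the integration by parts above, equals $\int j_f\cdot\xi\,dx$ with $j_f=\sqrt f\,w_f$; but more directly, dividing instead by $\big(\int f|\xi|^2\big)^{1/2}$ and testing against $\xi$ that approximate $-\big(-\nabla\Delta(Af+Bg)+\nabla(af+bg)\big)\mathbf 1_{\mathcal{P}_\delta^f}$, one gets $\tau\big(\int_{\mathcal{P}_\delta^f} f\,|{-}\nabla\Delta(Af+Bg)+\nabla(af+bg)|^2\,dx\big)^{1/2}\le W_2(f,f_0)$. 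Letting $\delta\searrow 0$ and using monotone convergence gives $\tau\|w_f\|_2\le W_2(f,f_0)$, in particular $w_f\in L_2(\R^2;\R^2)$ and $j_f=\sqrt f\,w_f\in L_1\cap L_2$; the bound for $w_g$ is identical starting from \eqref{R2}.

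The main obstacle I anticipate is the careful handling of the test functions near the boundary $\partial\mathcal{P}_\delta^f$ and the passage $\delta\to0$: one must ensure that the integrations by parts introducing $\nabla\Delta(Af+Bg)$ are justified only on the open sets where the elliptic bootstrap applies (hence the $H^3_{\mathrm{loc}}$ step is genuinely needed before \eqref{jef1} makes pointwise sense), and that no boundary contributions from $\{f=\delta\}$ survive — which is why one works with $\xi\in C_0^\infty(\mathcal{P}_\delta^f)$ and only afterwards exhausts $\mathcal{P}^f$, relying on $\nabla f=0$ a.e.\ on $\{f=0\}$ to control the complement. Everything else is a routine, if somewhat lengthy, integration-by-parts and density argument.
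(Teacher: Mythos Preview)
Your overall strategy is sound and close to the paper's, but you reverse the order of two key steps, and this reversal creates the very obstacle you flag at the end. The paper \emph{first} observes that the bound
\[
\Bigl|\int_{\R^2}[\Delta(Af+Bg)\,\mathrm{div}(f\xi)+f\nabla(af+bg)\cdot\xi]\,dx\Bigr|\le \tfrac{1}{\tau}W_2(f,f_0)\,\|f\|_\infty^{1/2}\|\xi\|_2
\]
holds for \emph{every} $\xi\in C_0^\infty(\R^2;\R^2)$ (not just those supported in $\mathcal P_\delta^f$; you restrict unnecessarily). Riesz representation then immediately yields a global $j_f\in L_2(\R^2;\R^2)$ satisfying \eqref{jef1} for all $\xi$, with no boundary issues. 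Only afterwards does the paper identify $j_f$ pointwise on $\mathcal P^f$ by testing with $\xi=\Xi/f$, $\Xi\in C_0^\infty(\mathcal P_\delta^f;\R^2)$ (a cleaner route to $\Delta(Af+Bg)\in H^1(\mathcal P_\delta^f)$ than your product-rule bootstrap, which in two dimensions needs an extra iteration since $\Delta u\,\nabla f$ is a priori only in $L_q$, $q<2$).

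For the estimate \eqref{C1} and for $j_f=0$ on $\R^2\setminus\mathcal P^f$, the paper does \emph{not} use cutoffs exhausting $\mathcal P^f$. Instead it applies Otto's mollifier trick: test \eqref{R1} with $\xi=\vartheta/\sqrt{Y_m+\chi_m*f}$ where $Y_m:=m^{-1}+\|\chi_m*f-f\|_\infty^{1/2}\to 0$; this produces a uniform $L_2$ bound on $j_f/\sqrt{Y_m+\chi_m*f}$, whose weak limit is $w_f$, and the same bound forces $\int_{\{f=0\}}|j_f|^2\,dx=0$. Your alternative (test against $\xi$ approximating the flux on $\mathcal P_\delta^f$ and let $\delta\searrow 0$ by monotone convergence) is a legitimate substitute for the $L_2$ bound itself, using that $\overline{\mathcal P_\delta^f}$ is compact in $\mathcal P^f$ since $f\in H^2(\R^2)\hookrightarrow C_0(\R^2)$. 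But your extension of \eqref{jef1} to arbitrary $\xi$ via a partition of unity is where the gap lies: generic cutoffs have gradients blowing up across $\{f=\delta\}$, and you would need the specific choice $\chi_\delta=\eta(f/\delta)$ so that $|f\nabla\chi_\delta|\le C|\nabla f|\in L_2$ to close the argument. This can be made to work, but it is exactly what the global Riesz step renders unnecessary.
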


\begin{proof}
Since $H^2(\R^2)$ is embedded in $C(\R^2)$, Lemma~\ref{Unimin} guarantees that $(f,g)\in C(\R^2;\R^2)$ so that $\mathcal{P}^f$ and $\mathcal{P}^g$ are indeed open subsets of $\R^2$. Next, recalling \eqref{R1}, we use once more the embedding of $H^2(\R^2)$ in $C(\R^2)$ as well as \eqref{new1} to obtain that, for $\xi\in C_0^\infty(\R^2;\R^2)$, 
\begin{align*}
& \left| \int_{\R^2} \left[ \Delta(A f+B g)\ \mathrm{div}(f\xi) + f\ \nabla(a  f+b g)\cdot \xi \right]\, dx \right| \\
\qquad &\le \frac{1}{\tau} \left( \int_{\R^2} |\id - T|^2\ f_0\, dx \right)^{1/2} \left( \int_{\R^2} |\xi\circ T|^2\ f_0\, dx \right)^{1/2} \\
\qquad &\le \frac{W_2(f,f_0)}{\tau} \left( \int_{\R^2} |\xi|^2 f\, dx\right)^{1/2} \le C \frac{W_2(f,f_0)}{\tau}\ \|f\|_{H^2}^{1/2}\ \|\xi\|_2.
\end{align*}
We may thus extend the functional
$$
\xi\longmapsto \int_{\R^2} \left[ \Delta(A f+B g)\ \mathrm{div}(f\xi) + f\ \nabla(a  f+b g)\cdot \xi \right]\, dx
$$
to a continuous linear functional on $L_2(\R^2;\R^2).$ Consequently, there exists a unique function $j_f\in L_2(\R^2;\R^2)$ having the property that
\begin{equation}\label{jef}
\int_{\R^2} \left[\Delta(A f+B g)\ \mathrm{div}(f\xi) + f\ \nabla(a  f+b g)\cdot \xi \right]\, dx = \int_{\R^2} j_f\cdot \xi\, dx 
\qquad\text{for all $\xi\in C_0^\infty(\R^2;\R^2).$}
\end{equation}
Since $(f,g)\in H^2(\R^2;\R^2)$ by Lemma~\ref{Unimin}, a density argument ensures that the relation \eqref{jef} is actually true for all $\xi\in H^1(\R^2;\R^2).$ 

Consider now $\delta>0$ and $\Xi\in C_0^\infty(\mathcal{P}^f_\delta;\R^2)$. Clearly $\Xi/f\in H^1(\R^2;\R^2)$ and we infer from \eqref{jef} with $\xi=\Xi/f$ that
\begin{equation}\label{jefrr}
\left| \int_{\mathcal{P}^f_\delta} \Delta(A f+B g)\ \mathrm{div}(\Xi)\, dx \right| \le \|\Xi\|_{L_2(\mathcal{P}^f_\delta)}\ \|\nabla(a f+b g)\|_2 + \|\Xi\|_{L_2(\mathcal{P}^f_\delta)}\ \frac{\|j_f\|_2}{\delta}\,.
\end{equation}
A duality argument then gives that $\Delta(Af+Bg)\in H^1(\mathcal{P}^f_\delta)$ for all $\delta>0.$ 
Consequently, we get  that $Af+Bg\in H^3_{\text{loc}}(\mathcal{P}^f)$ and together with \eqref{jef} we deduce
\begin{equation}\label{ds}
j_f = - f \nabla\Delta(A f+B g) + f\nabla(a  f+b g) \;\;\text{ a.e. in }\;\; \mathcal{P}^f\,.
\end{equation}

We next prove \eqref{C1}, adapting an argument from \cite[Proposition~2]{Ot98} and \cite[Corollary~2.3]{LMxx}. 
Let $\chi\in C_0^\infty(\R^2)$ be a non-negative function with $\|\chi\|_1=1$ and set $\chi_m(x) := m^2 \chi(mx)$ for $x\in\R^2$ and $m\ge 1.$ Since $H^2(\R^2)$ is embedded in $C(\R^2)$, we have
\begin{equation}
Y_m := \frac{1}{m} + \|\chi_m*f-f\|_{\infty}^{1/2} \longrightarrow 0 \;\;\text{ as }\;\; m\to\infty\,.\label{d11}
\end{equation}
Given $\vartheta\in C_0^\infty(\R^2;\R^2) $, the vector field $\vartheta/\sqrt{Y_m+\chi_m*f}$ belongs to $ C_0^\infty(\R^2;\R^2) $ too, 
and, by \eqref{new1}, \eqref{R1}, and \eqref{jef} with the choice $\xi=\vartheta/\sqrt{Y_m+\chi_m*f}$,
\begin{align*}
\left|\int_{\R^2} \frac{j_f\cdot \vartheta}{\sqrt{Y_m+\chi_m*f}} \, dx \right|&\leq \frac{W_2(f,f_0)}{\tau} \left(\int_{\R^2} |\vartheta|^2\ \frac{f}{Y_m+\chi_m*f}\, dx\right)^{1/2}\\
&\leq \frac{W_2(f,f_0)}{\tau} \left\|\frac{f}{Y_m+\chi_m*f}\right\|_\infty^{1/2}\ \|\vartheta\|_2.
\end{align*}
A duality argument then ensures that, for each $m\ge 1$, $j_f/\sqrt{Y_m+\chi_m*f}$ belongs to $L_2(\R^2;\R^2)$ with the estimate
$$
\left\| \frac{j_f}{\sqrt{Y_m+\chi_m*f}} \right\|_2 \le \frac{W_2(f,f_0)}{\tau} \left\|\frac{f}{Y_m+\chi_m*f}\right\|_\infty^{1/2}\,.
$$
Observing that 
$$
0 \le \frac{f}{Y_m+\chi_m*f} = \frac{f - \chi_m*f + \chi_m*f}{Y_m+\chi_m*f} \le \frac{\|f-\chi_m*f\|_\infty}{Y_m} + 1 \le 1+Y_m\,, 
$$
we actually have the estimate
\begin{equation}\label{jefy}
\left\|\frac{j_f }{\sqrt{Y_m+\chi_m*f}}\right\|_2\leq \frac{W_2(f,f_0)}{\tau}\ \left( 1 + Y_m \right).
\end{equation}
Several consequences can be drawn from \eqref{jefy}: first, since $Y_m\to 0$ as $m\to\infty$ by \eqref{d11}, 
the sequence $(j_f/\sqrt{Y_m + \chi_m*f})_m$ is bounded in $L_2(\R^2;\R^2)$ and there are thus a subsequence of $(j_f/\sqrt{Y_m + \chi_m*f})_m$ 
(not relabeled) and $w_f\in L_2(\R^2;\R^2)$ such that
\begin{equation}
\frac{j_f}{\sqrt{Y_m+\chi_m*f}} \rightharpoonup w_f \;\;\text{ in }\;\; L_2(\R^2;\R^2)\,. \label{d12}
\end{equation}
A simple consequence of \eqref{d11}, \eqref{jefy}, and \eqref{d12} is that 
\begin{equation}
\tau\ \|w_f\|_2 \leq W_2(f,f_0)\,. \label{d13}
\end{equation}
In addition, since $(\sqrt{Y_m+\chi_m*f})_m$ converges towards $\sqrt{f}$ uniformly on compact subsets of $\R^2$, we readily deduce from \eqref{d12} that
\begin{equation}
j_f = \sqrt{f}\ w_f \;\;\text{ a.e. in }\;\; \R^2\,. \label{d14}
\end{equation}
Next, since $f=0$ a.e. in $\R^2\setminus \mathcal{P}^f$, it follows from \eqref{jefy} that
\begin{align*}
\int_{\R^2\setminus \mathcal{P}^f} |j_f|^2\, dx &= \int_{\R^2\setminus \mathcal{P}^f} \frac{|j_f|^2 }{Y_m+\chi_m*f}\ \left( Y_m+\chi_m*f - f \right)\, dx \\
&\le \left\| \frac{j_f}{\sqrt{Y_m+\chi_m*f}} \right\|_2^2\ \left( Y_m + \|\chi_m*f - f \|_\infty \right) \\
&\le  \frac{W_2^2(f,f_0)}{\tau^2}\ (1+Y_m)^{3}\ Y_m \mathop{\longrightarrow}_{m\to \infty} 0\,,
\end{align*}
whence, additionally to \eqref{ds}, 
\begin{equation}
j_f = 0 \;\;\text{ a.e. in }\;\; \R^2\setminus \mathcal{P}^f\,. \label{d15}
\end{equation}
Finally, owing to \eqref{d12} and \eqref{d15}, we have
$$
\int_{\R^2\setminus \mathcal{P}^f} |w_f|^2\, dx = \lim_{m\to\infty} \int_{\R^2\setminus \mathcal{P}^f} w_f\cdot \frac{j_f}{\sqrt{Y_m+\chi_m*f}}\, dx = 0\,,
$$
and thus $w_f=0$ a.e. in $\R^2\setminus \mathcal{P}^f$. This completes the proof of Lemma~\ref{le:h3} for $f$. The statements \eqref{jef2} and \eqref{C1} for $g$ are proved by similar arguments.
\end{proof}

\section{Convergence of the time discretization}\label{sec:ctd}

We pick now $\tau>0$ and $(f_0,g_0)\in \cK_2$. For each integer $n\ge 1$,  we define $(f_\tau^{n+1},g_\tau^{n+1})\in \cK_2$ as a solution to the minimization problem
\begin{equation*}
\inf_{(u,v)\in\cK_2}\cF_\tau^n(u,v)\,,
\end{equation*}
where $(f_\tau^0,g_\tau^0) := (f_0,g_0)$ and
\begin{equation*}
\cF_\tau^n(u,v) := \frac{1}{2\tau}\ \left(W_2^2(u,f_\tau^n)+B\ W_2^2(v,g_\tau^n) \right) + \E(u,v)\,, \quad (u,v)\in\cK_2.
\end{equation*}
Recall that $(f_\tau^{n+1},g_\tau^{n+1})$ is well-defined and belongs to $H^2(\R^2;\R^2)$ for all $n\ge 1$ by Lemma~\ref{Unimin}. 
We next let $(f_\tau,g_\tau):[0,\infty)\times\R^2\to\cK_2$ be the function obtained  by the method of piecewise constant interpolation in $\cK_2$ as follows: $(f_\tau,g_\tau)(t) := (f_\tau^n,g_\tau^n)$ for all $t\in[n\tau,(n+1)\tau)$ and $n\in\N.$ 

The next lemma collects estimates which allow us to perform the limit $\tau\to 0$ and construct in this way a weak solution of \eqref{eq:problem}.

\begin{lemma}\label{L:unifestimates}
There is $C_3>0$ depending only on $A$, $B$, $a$, $b$, $c$, $f_0$, and $g_0$ such that, for all $T\geq0$ and $\tau\in(0,1)$, we have
\begin{align}
\label{a1}
(i)& \qquad \| f_\tau(T)\|_1 =\| g_\tau(T)\|_1 = 1,\\
\label{a2}
(ii)& \qquad\sum_{n=0}^\infty \left[W_2^2(f_\tau^{n+1},f_\tau^n )+W_2^2( g_\tau^{n+1}, g_\tau^n)\right]\leq C_3\ \tau,\\
\label{a3}
(iii)&\qquad \E(f_\tau(T),g_\tau(T))\leq \E(f_0,g_0),\\
\label{a4}
(iv)&\qquad \int_{\R^2} \left[ f_\tau(T,x) + g_\tau(T,x) \right]\ (1+|x|^2)\, dx \leq C_3\ (1+T),\\
\label{a5}
(v)&\qquad \int_\tau^{\max{\{T,\tau\}}} \left[ \|\Delta f_\tau(s)\|_2^2+\|\Delta g_\tau(s)\|_2^2\right]\, ds \leq C_3\ (1+T),\\
\label{a6}
(vi)&\qquad\int_\tau^\infty \left[ \left\| w_{f_\tau} \right\|_2^2 + \left\| w_{g_\tau} \right\|_2^2 \right]\,ds \leq C_3\,,
\end{align}
where
\begin{equation}\label{QEftau}
w_{f_\tau} := \left\{ 
\begin{array}{cl}
\sqrt{f_\tau}\ \left( -\nabla\Delta(A f_\tau + B g_\tau) + \nabla(a f_\tau +b g_\tau) \right) & \;\;\text{a.e. in }\;\; \mathcal {P}^{f_\tau}\,, \\
0 & \;\;\text{a.e. in }\;\; \R^2\setminus\mathcal {P}^{f_\tau}\,,
\end{array}
\right.
\end{equation}
and
\begin{equation}\label{QEgtau}
w_{g_\tau} := \left\{ 
\begin{array}{cl}
\sqrt{g_\tau}\ \left( -\nabla\Delta( f_\tau + g_\tau) + c\nabla(  f_\tau + g_\tau) \right) & \;\;\text{a.e. in }\;\; \mathcal {P}^{g_\tau}\,, \\
0 & \;\;\text{a.e. in}\;\; \R^2\setminus\mathcal{P}^{g_\tau}\,.
\end{array}\right. 
\end{equation}
\end{lemma}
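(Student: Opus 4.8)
The plan is to prove each of the six estimates by combining the variational structure of the scheme with the results already established for single minimization steps in Lemmas~\ref{Unimin}, \ref{le:sup1}, and \ref{le:h3}, applied with the choice $(f_0,g_0)$ replaced by $(f_\tau^n,g_\tau^n)$.

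First, item $(i)$ is immediate: by construction $(f_\tau^n,g_\tau^n)\in\cK_2$ for every $n$, hence $\|f_\tau^n\|_1=\|g_\tau^n\|_1=1$, and the piecewise constant interpolation preserves this. For item $(iii)$, the minimizing property gives $\cF_\tau^n(f_\tau^{n+1},g_\tau^{n+1})\le\cF_\tau^n(f_\tau^n,g_\tau^n)=\E(f_\tau^n,g_\tau^n)$, so that
\begin{equation*}
\frac{1}{2\tau}\left[W_2^2(f_\tau^{n+1},f_\tau^n)+B\,W_2^2(g_\tau^{n+1},g_\tau^n)\right]+\E(f_\tau^{n+1},g_\tau^{n+1})\le\E(f_\tau^n,g_\tau^n)\,;
\end{equation*}
dropping the nonnegative Wasserstein terms yields $\E(f_\tau^{n+1},g_\tau^{n+1})\le\E(f_\tau^n,g_\tau^n)$, and iterating down to $n=0$ gives $(iii)$. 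For item $(ii)$, I would instead sum the displayed inequality over $n=0,\dots,N$: the energy terms telescope, leaving
\begin{equation*}
\frac{1}{2\tau}\sum_{n=0}^{N}\left[W_2^2(f_\tau^{n+1},f_\tau^n)+B\,W_2^2(g_\tau^{n+1},g_\tau^n)\right]\le\E(f_0,g_0)-\E(f_\tau^{N+1},g_\tau^{N+1})\le\E(f_0,g_0)+C_1\,,
\end{equation*}
using the lower bound from Lemma~\ref{L:0}; since $B>0$, this bounds $\sum(W_2^2(f_\tau^{n+1},f_\tau^n)+W_2^2(g_\tau^{n+1},g_\tau^n))$ by $C\tau$, which is $(ii)$.

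Item $(iv)$ follows from $(ii)$ by the triangle inequality for $W_2$ combined with the classical fact (as used around \eqref{eq:6}) that $\int_{\R^2}h(x)(1+|x|^2)\,dx$ is controlled by $W_2^2(h,\delta_0)$ up to constants, or more directly: writing $M_2(h):=\int|x|^2h\,dx$, one has $\sqrt{M_2(f_\tau^{n+1})}\le\sqrt{M_2(f_\tau^n)}+W_2(f_\tau^{n+1},f_\tau^n)$, so summing and using $(ii)$ together with $\sqrt{n\tau}\lesssim\sqrt{T}$ for $n\tau\le T$ and Cauchy--Schwarz on $\sum W_2$ gives $\sqrt{M_2(f_\tau(T))}\le\sqrt{M_2(f_0)}+\sqrt{C_3 T}$, hence $(iv)$ after squaring and adding the trivial $L_1$-bound. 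Item $(v)$ comes from the regularity estimate \eqref{Heat3} of Lemma~\ref{Unimin}, applied at each step with $(f_0,g_0)\rightsquigarrow(f_\tau^n,g_\tau^n)$: using $A>B>0$ and Young's inequality to absorb the possibly negative $(a-b)\|\nabla f\|_2^2$ term (controlled via \eqref{eq:2} by $\|\Delta f\|_2$ and hence half-absorbed), we get $\tau(\|\Delta f_\tau^{n+1}\|_2^2+\|\Delta g_\tau^{n+1}\|_2^2)\lesssim H(f_\tau^n)-H(f_\tau^{n+1})+B(H(g_\tau^n)-H(g_\tau^{n+1}))+C\tau$; the entropy terms telescope when summed over $n$, and since $H$ is bounded below on $\cK$ with $\|\cdot\|_1=1$ and second moment controlled by $(iv)$ (see Lemma~\ref{L:A2}), the sum over $n\tau\le T$ is bounded by $C_3(1+T)$, giving $(v)$. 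Finally, $(vi)$ follows from estimate \eqref{C1} of Lemma~\ref{le:h3} applied at each step: $\tau^2\|w_{f_\tau^{n+1}}\|_2^2\le W_2^2(f_\tau^{n+1},f_\tau^n)$ and similarly for $g$; dividing by $\tau$ and summing over all $n\ge0$ and using $(ii)$ gives $\int_\tau^\infty(\|w_{f_\tau}\|_2^2+\|w_{g_\tau}\|_2^2)\,ds=\tau\sum_{n\ge1}(\|w_{f_\tau^n}\|_2^2+\|w_{g_\tau^n}\|_2^2)\le C_3$.

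The main obstacle I anticipate is item $(v)$: one must carefully handle the sign-indefinite term $(a-b)\|\nabla f\|_2^2$ appearing on the left of \eqref{Heat3} when $a<b$, making sure the absorption into $\|\Delta f\|_2^2$ (via the Gagliardo--Nirenberg inequality \eqref{eq:2} applied on $\cK$) leaves a strictly positive coefficient in front of $\|\Delta f_\tau^{n+1}\|_2^2$ and does not accumulate an uncontrolled constant when summed; this is where the strict inequality $A>B$ is essential. A secondary point of care is that the entropy differences on the right of \eqref{Heat3}, after telescoping, require a lower bound on $H(f_\tau^{N+1})$ and $H(g_\tau^{N+1})$, which is not uniform in $N$ a priori but follows from the second-moment bound $(iv)$ via the standard lower estimate $H(h)\ge -C(1+M_2(h))$ on probability densities; since $(iv)$ grows only linearly in $T$, the resulting bound in $(v)$ is indeed of the form $C_3(1+T)$.
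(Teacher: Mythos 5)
Your proposal is correct and follows the paper's proof almost step for step: items $(i)$--$(iv)$ and $(vi)$ are obtained exactly as in the paper (minimizing property, telescoping of the energy, lower bound on $\E$ from Lemma~\ref{L:0}, second moments via the triangle inequality for $W_2$, and \eqref{C1} summed against \eqref{a2}). The one place you genuinely diverge is the treatment of the sign-indefinite term $(b-a)_+\|\nabla f_\tau^{n+1}\|_2^2$ in item $(v)$: you propose to absorb it into $(A-B)\|\Delta f_\tau^{n+1}\|_2^2$ via Gagliardo--Nirenberg interpolation and Young's inequality, whereas the paper simply bounds $\|\nabla f_\tau(s)\|_2^2$ uniformly in $s$ by combining the energy monotonicity \eqref{a3} with the coercivity estimate \eqref{eq:3}, so that $(b-a)_+\int_\tau^{(N+1)\tau}\|\nabla f_\tau\|_2^2\,ds\le C(1+T)$ with no absorption needed. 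Both routes work; the paper's is slightly more economical, and note that your citation of \eqref{eq:2} for the absorption is imprecise --- \eqref{eq:2} only gives $\|h\|_2^2\le C_2^2\|\nabla h\|_2$, so to dominate $\|\nabla f\|_2^2$ by $\e\|\Delta f\|_2^2+C(\e)$ you additionally need the interpolation $\|\nabla h\|_2^2\le\|h\|_2\,\|\Delta h\|_2$ --- but this is a matter of bookkeeping rather than a gap. Your handling of the lower bound on the terminal entropies via Lemma~\ref{L:A2} and \eqref{a4} is exactly the paper's argument.
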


\begin{proof}
The assertion \eqref{a1} follows from the fact that $(f_{\tau}^n,g_\tau^n) \in \cK_2$ for all $n\in\N$ and $\tau>0.$ 
We next observe that, since $\cF_\tau^n(f_\tau^n,g_\tau^n) \geq \cF_\tau^n(f_\tau^{n+1},g_\tau^{n+1})$ for all $n\in\N,$ we have
\begin{equation*}
\frac{1}{2\tau} \left[ W_2^2(f_\tau^{n+1},f_\tau^n) + B\ W_2^2(f_\tau^{n+1},g_\tau^n) \right] + \E(f_\tau^{n+1},g_\tau^{n+1}) \leq \E(f_\tau^n,g_\tau^n),
\end{equation*}
and therefore, for all $N\in\N,$
\begin{equation}\label{4a}
\frac{1}{2\tau} \sum_{n=0}^{N-1} \left[ W_2^2(f_\tau^{n+1},f_\tau^n) + B\ W_2^2(f_\tau^{n+1},g_\tau^n) \right] + \E(f_\tau^{N},g_\tau^{N}) \leq \E(f_0,g_0).
\end{equation}
Recalling that the functional $\E$ is bounded from below by Lemma~\ref{L:0}, we obtain \eqref{a2} after letting $N\to\infty$ in \eqref{4a}. 
Moreover, given $T\ge 0$, we choose $N\geq 1$ such  that $T\in[N\tau, (N+1)\tau)$ in \eqref{4a} and arrive at \eqref{a3}. 
Next, the bound \eqref{a4} follows readily from \eqref{a2} and the property $(f_0,g_0)\in\cK_2$ in a similar manner as \eqref{eq:6}.  

In order to deduce \eqref{a5}, we infer from Lemma~\ref{Unimin} that, for $n\in\N,$
\begin{equation*}
\begin{aligned}
&(A-B)\|\Delta f_\tau^{n+1}\|_2^2 + B\|\Delta (f_\tau^{n+1}+g_\tau^{n+1})\|_2^2 + (a-b)_+ \|\nabla f_\tau^{n+1}\|_2^2 + b \|\nabla (f_\tau^{n+1}+g_\tau^{n+1})\|_2^2\\
&\leq\frac{1}{\tau}\ \left[ H(f_\tau^n)-H(f_\tau^{n+1}) +B\left( H(g_\tau^n)-H(g_\tau^{n+1}) \right) \right] + (b-a)_+ \|\nabla f_\tau^{n+1}\|_2^2\,.
\end{aligned}
\end{equation*}
Summation from $n=0$ to $n=N-1$ yields
\begin{equation}\label{QQQ}
\begin{aligned}
\int_\tau^{(N+1)\tau} &\left[ (A-B)\|\Delta f_\tau(s)\|_2^2 + B\|\Delta (f_\tau+g_\tau)(s)\|_2^2 \right. \\
& \qquad \left. + (a-b)_+ \|\nabla f_\tau(s)\|_2^2 + b\ \|\nabla(f_\tau+g_\tau)(s) \|_2^2 \right]\, ds\\
\leq\ & \left[ H(f_0)-H(f_\tau^{N}) +B\left( H(g_0)-H(g_\tau^N) \right) \right]+(b-a)_+\int_\tau^{(N+1)\tau}\|\nabla f_\tau(s)\|_2^2\, ds.
\end{aligned}
\end{equation}
We now use Lemma~\ref{L:0}, Lemma~\ref{L:A2}, and the estimates \eqref{a3} and \eqref{a4}  to obtain
\begin{align*}
& \int_\tau^{(N+1)\tau} \left[ (A-B)\|\Delta f_\tau(s)\|_2^2 + B\|\Delta (f_\tau+g_\tau)(s)\|_2^2 \right]\, ds \\
& \le C_H + \int_{\R^2} f_0(x)\ (1+|x|^2)\, dx + \|f_0\|_2^2 + C_H +  \int_{\R^2} f_\tau^N(x)\ (1+|x|^2)\, dx \\
& + B \left( C_H + \int_{\R^2} g_0(x)\ (1+|x|^2)\, dx + \|g_0\|_2^2 + C_H +  \int_{\R^2} g_\tau^N(x)\ (1+|x|^2)\, dx \right) \\ 
& + \frac{4 (b-a)_+}{A-B}\ \int_\tau^{(N+1)\tau} \left( \E(f_\tau(s),g_\tau(s))+ C_1 \right)\, ds \le C\ (1+T)\,,
\end{align*}
for $T\in[N\tau,(N+1)\tau)$, hence \eqref{a5}. Finally, \eqref{a6} follows from \eqref{C1} and \eqref{a2}.
\end{proof} 

Using  uniform estimates from  Lemma \ref{L:unifestimates}, we now establish  the time equicontinuity of the family $(f_\tau,g_\tau)_\tau$. 
This step is one of the arguments needed to prove the compactness of  $(f_\tau,g_\tau)_\tau$.

\begin{lemma}
There exists a positive constant $C_4$ such that, for all $t\in [0,\infty)$, $s\in[0,\infty)$, and $\tau\in(0,1)$ we have
\begin{equation}\label{eq:comp}
\|f_\tau(t)-f_{\tau}(s)\|_{H^{-4}}+\|g_\tau(t)-g_{\tau}(s)\|_{H^{-4}}\leq C_4 \sqrt{|t-s|+\tau}.
\end{equation}
\end{lemma}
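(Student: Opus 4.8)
The plan is to establish the $H^{-4}$ equicontinuity estimate \eqref{eq:comp} by first proving it for times of the form $t=n\tau$ and $s=m\tau$ with $m\le n$, and then extending to arbitrary $t,s$ at the cost of the extra $\sqrt{\tau}$ in the right-hand side. For the discrete times, I would use a telescoping argument together with the weak formulation of the Euler--Lagrange equations. Indeed, given $\xi\in C_0^\infty(\R^2)$ with $\|\xi\|_{H^4}\le 1$, applying \eqref{A1} from Lemma~\ref{le:sup1} at each step (with $(f_0,g_0)$ replaced by $(f_\tau^k,g_\tau^k)$ and the minimizer being $(f_\tau^{k+1},g_\tau^{k+1})$) and summing over $k=m,\dots,n-1$ gives
\begin{align*}
\left| \int_{\R^2} (f_\tau^n - f_\tau^m)\xi\, dx \right| & \le \tau \sum_{k=m}^{n-1} \left| \int_{\R^2} \left[ \Delta(A f_\tau^{k+1} + B g_\tau^{k+1})\, \mathrm{div}(f_\tau^{k+1}\nabla\xi) + f_\tau^{k+1}\, \nabla(a f_\tau^{k+1} + b g_\tau^{k+1})\cdot\nabla\xi \right] dx \right| \\
& \quad + \frac{\|D^2\xi\|_\infty}{2} \sum_{k=m}^{n-1} W_2^2(f_\tau^{k+1}, f_\tau^k)\,.
\end{align*}
The second sum is bounded by $C_3\tau/2$ thanks to \eqref{a2}, which already has the right order since $n\tau - m\tau = |t-s|$ and $C_3\tau \le C_3\sqrt{\tau}\sqrt{|t-s|+\tau}$ when $|t-s|\ge\tau$ (and the estimate is trivial otherwise). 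For the first sum, I would recognize the integrand as $j_{f_\tau}\cdot\nabla\xi$ via \eqref{jef1}, write $j_{f_\tau} = \sqrt{f_\tau}\, w_{f_\tau}$ by \eqref{hip}, and estimate by Cauchy--Schwarz:
$$
\tau \sum_{k=m}^{n-1} \left| \int_{\R^2} \sqrt{f_\tau^{k+1}}\, w_{f_\tau^{k+1}}\cdot\nabla\xi\, dx \right| \le \tau \sum_{k=m}^{n-1} \|f_\tau^{k+1}\|_1^{1/2}\, \|w_{f_\tau^{k+1}}\|_2\, \|\nabla\xi\|_\infty\,.
$$
Using $\|f_\tau^{k+1}\|_1 = 1$, the Sobolev embedding $H^4(\R^2)\hookrightarrow W^{1,\infty}(\R^2)$ to bound $\|\nabla\xi\|_\infty \le C\|\xi\|_{H^4}\le C$, and then Cauchy--Schwarz in the $k$-sum, this is controlled by $C\tau\big(\sum_{k=m}^{n-1}\|w_{f_\tau^{k+1}}\|_2^2\big)^{1/2}\big(\sum_{k=m}^{n-1}1\big)^{1/2} = C\sqrt{\tau}\,\big(\int_{m\tau}^{n\tau}\|w_{f_\tau}\|_2^2\,ds\big)^{1/2}\sqrt{|t-s|}$, and the remaining time integral is bounded by $C_3$ via \eqref{a6}. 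Altogether the discrete estimate $\|f_\tau^n - f_\tau^m\|_{H^{-4}}\le C\sqrt{|t-s|+\tau}$ follows, and symmetrically for $g$ using \eqref{A2}, \eqref{jef2}, and the $g$-counterparts in \eqref{a6}; note that the term $D^2\xi$ and the Hessian bound are again absorbed by $\|\xi\|_{H^4}$.

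For general $t,s\in[0,\infty)$, write $t\in[n\tau,(n+1)\tau)$ and $s\in[m\tau,(m+1)\tau)$; then $(f_\tau,g_\tau)$ is constant on each of these intervals and equal to $(f_\tau^n,g_\tau^n)$ resp. $(f_\tau^m,g_\tau^m)$, so $\|f_\tau(t)-f_\tau(s)\|_{H^{-4}} = \|f_\tau^n - f_\tau^m\|_{H^{-4}}$. Since $|n\tau - m\tau| \le |t-s| + \tau$, the discrete bound gives exactly \eqref{eq:comp} with $C_4$ depending only on $C_3$ and the Sobolev constant.

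I expect the main technical point to be the bookkeeping that the quantity $\int_{\R^2}[\Delta(Af+Bg)\,\mathrm{div}(f\nabla\xi) + f\nabla(af+bg)\cdot\nabla\xi]\,dx$ appearing in Lemma~\ref{le:sup1} is exactly $\int_{\R^2} j_f\cdot\nabla\xi\,dx$ with the structure $j_f=\sqrt f\, w_f$ from Lemma~\ref{le:h3}, so that the $L_2$-in-space-time control of $w_{f_\tau}$ from \eqref{a6} can be brought to bear, rather than trying to estimate fourth-order derivatives of $f_\tau$ directly (which is why the negative index $-4$, matching the order of $\mathrm{div}(f\nabla\xi)$ needing $\xi\in H^3$ plus a margin, is the natural choice). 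Once the replacement $j_f = \sqrt{f}\,w_f$ is in hand and one observes that only $\|\nabla\xi\|_\infty$ and $\|D^2\xi\|_\infty$ enter—both dominated by $\|\xi\|_{H^4}$ in two dimensions—the estimate is a routine Cauchy--Schwarz and telescoping computation. The appearance of $\sqrt\tau$ (rather than a cleaner bound) is unavoidable and stems precisely from pairing the $O(\tau)$ Wasserstein increments in \eqref{a2} with the number $O(|t-s|/\tau)$ of steps.
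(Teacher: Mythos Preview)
Your proposal is correct and follows essentially the same route as the paper: telescope using the Euler--Lagrange inequality \eqref{A1}, rewrite the flux term via \eqref{jef1} as $\int j_{f_\tau}\cdot\nabla\xi$, use $j_{f_\tau}=\sqrt{f_\tau}\,w_{f_\tau}$ together with $\|f_\tau\|_1=1$ and the embedding $H^4(\R^2)\hookrightarrow W^2_\infty(\R^2)$, and close with Cauchy--Schwarz and \eqref{a2}. The only cosmetic difference is that the paper bounds $\tau\|w_{f_\tau^{n}}\|_2$ directly by $W_2(f_\tau^{n},f_\tau^{n-1})$ via \eqref{C1} before summing, whereas you sum first and invoke \eqref{a6}; since \eqref{a6} is itself a consequence of \eqref{C1} and \eqref{a2}, the two arguments are equivalent.
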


\begin{proof} Let $0\leq s<t$ with $s\in[\nu\tau,(\nu+1)\tau)$, $\nu\ge 0$, and $t\in[N\tau,(N+1)\tau)$, $N\ge\nu$, be given.
 By virtue of \eqref{A1}, \eqref{QEf}, and \eqref{jef1} we have for $\xi\in C_0^\infty(\R^2)$ and $n\ge 1$, 
$$
\left|\int_{\R^2} (f^{n}_\tau-f^{n-1}_\tau)\ \xi\, dx \right| \leq \tau \left|\int_{\R^2} j_{f_\tau^n} \nabla \xi\,dx \right| + \frac{\|D^2\xi\|_{\infty}\ W_2^2(f^{n}_\tau,f^{n-1}_\tau)}{2}\,.
$$
Using \eqref{C1}, \eqref{a1}, and H\"older's inequality, we obtain
\begin{align*}
\left|\int_{\R^2} (f^{n}_\tau-f^{n-1}_\tau)\ \xi\, dx \right| &\leq \tau \| f_\tau^n\|_1^{1/2} \left\| w_{f_\tau^n} \right\|_2\ \|\nabla\xi\|_\infty + \|D^2\xi\|_{\infty}\ W_2^2(f^{n}_\tau,f^{n-1}_\tau)\\
& \leq \|\nabla\xi\|_\infty\ W_2(f^{n}_\tau,f^{n-1}_\tau) + \|D^2\xi\|_{\infty}\ W_2^2(f^{n}_\tau,f^{n-1}_\tau)\,,
\end{align*}
whence, owing to the continuous embedding of $H^4(\R^2)$ in $W^2_\infty(\R^2)$, 
\begin{align*}
\left|\int_{\R^2} (f^{n}_\tau-f^{n-1}_\tau)\ \xi\, dx \right| & \leq C \left[ W_2(f^{n}_\tau,f^{n-1}_\tau) +W_2^2(f^{n}_\tau,f^{n-1}_\tau) \right]\ \|\xi\|_{H^4}\,.
\end{align*}
Therefore, by \eqref{a2}, we have
\begin{align*}
\left|\int_{\R^2} (f_\tau(t)-f_\tau(s))\ \xi\, dx \right|&\leq \sum_{n=\nu+1}^N \left| \int_{\R^2} (f^{n}_\tau-f^{n-1}_\tau)\ \xi\,dx\right| \\
&\leq C \|\xi\|_{H^4}\ \sum_{n=\nu+1}^N \left[ W_2(f^{n}_\tau,f^{n-1}_\tau) + W_2^2(f^{n}_\tau,f^{n-1}_\tau) \right]\\
&\leq C \|\xi\|_{H^4} \left[ \sqrt{N-\nu} \left(\sum_{n=\nu+1}^N W_2^2(f^{n}_\tau,f^{n-1}_\tau)\right)^{1/2} + C_3 \tau \right] \\
&\leq C \left[ \sqrt{(N-\nu)\tau} + \sqrt{\tau} \right] \|\xi\|_{H^4} \le C \sqrt{t-s+\tau}\ \|\xi\|_{H^4}\,,
\end{align*}
which yields \eqref{eq:comp} for $f_\tau$. A similar computation based on \eqref{A2}, \eqref{QEg}, and \eqref{jef2} gives \eqref{eq:comp} for $g_\tau$.
\end{proof}

We are now in a position to study the compactness properties of $(f_\tau,g_\tau)_\tau$ as $\tau\to 0$. 

\begin{lemma}\label{L:CO} There exist nonnegative functions $f$ and $g$ in $C([0,\infty), L_2(\R^2))$ and a subsequence  $(\tau_k)_{k\ge 1}$  which converges to zero such that, for all $t\ge 0$,
\begin{align}
\label{CO1}
&(f_{\tau_k}(t),g_{\tau_k}(t))\to (f(t), g(t))\qquad\text{in $L_2(\R^2;\R^2)$},\\
\label{CO2}
&\text{$(f_{\tau_k},g_{\tau_k})\to (f,g) $ \qquad in $L_2(0,t;H^1(\R^2;\R^2))$,}
\end{align}
and $(f(t),g(t))\in\cK_2.$ Moreover, we have $(f,g)\in L_2(0,t;H^2(\R^2;\R^2))$ and 
\begin{equation}
\text{$(f_{\tau_k},g_{\tau_k})\rightharpoonup (f,g) $ \qquad in $L_2(\delta,t;H^2(\R^2;\R^2))$} \label{CO3}
\end{equation}
for all $t>0$ and $\delta\in (0,t)$.
\end{lemma}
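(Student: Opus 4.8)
The plan is to combine the uniform bounds from Lemma~\ref{L:unifestimates} with the time equicontinuity estimate \eqref{eq:comp} in a suitable Arzel\`a--Ascoli/Aubin--Lions framework, followed by an interpolation argument to upgrade weak $H^2$-convergence on time slabs bounded away from $0$ to strong $H^1$-convergence.

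\textbf{Step 1: A refined Gagliardo--Nirenberg bound.} First I would observe that, thanks to \eqref{a3}, Lemma~\ref{L:0}, and the Gagliardo--Nirenberg--Sobolev inequality \eqref{eq:1}--\eqref{eq:2}, the family $(f_\tau,g_\tau)_\tau$ is bounded in $L_\infty(0,\infty;H^1(\R^2;\R^2))$ uniformly in $\tau\in(0,1)$; moreover \eqref{a4} gives a uniform bound on the second moments of $f_\tau(t)$ and $g_\tau(t)$ that grows at most linearly in $t$.

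\textbf{Step 2: Pointwise-in-time compactness.} For each fixed $t\ge 0$, the set $\{(f_\tau(t),g_\tau(t)):\tau\in(0,1)\}$ is bounded in $H^1(\R^2;\R^2)\cap L_1(\R^2,(1+|x|^2)\,dx)$, which embeds compactly into $L_2(\R^2;\R^2)$ by Lemma~\ref{le:comp}. Hence every sequence $\tau_k\searrow 0$ has a subsequence along which $(f_{\tau_k}(t),g_{\tau_k}(t))$ converges strongly in $L_2(\R^2;\R^2)$. Combined with the equicontinuity \eqref{eq:comp} in $H^{-4}$ and a standard refined Arzel\`a--Ascoli argument (or a diagonal extraction over a countable dense set of times $t$, then interpolating $\|\cdot\|_{L_2}$ between the $H^1$-bound and the $H^{-4}$-modulus of continuity), I would extract a single subsequence $(\tau_k)_{k\ge1}$ and limit functions $f,g\in C([0,\infty),L_2(\R^2))$, both nonnegative, such that \eqref{CO1} holds for every $t\ge 0$. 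The limit lies in $\cK_2$ for each $t$: nonnegativity passes to the $L_2$-limit, the mass constraint $\|f(t)\|_1=\|g(t)\|_1=1$ follows from \eqref{a1} together with the uniform second-moment bound \eqref{a4} and a Vitali/uniform-integrability argument as in the proof of Lemma~\ref{Unimin} (using \eqref{conv}--\eqref{conv2}), and the $H^1$-membership follows from the weak-$H^1$ lower semicontinuity of the norm.

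\textbf{Step 3: Strong $L_2(0,t;H^1)$ convergence.} From the $L_\infty(0,\infty;H^1)$-bound and the estimate \eqref{a5} we get that $(f_\tau,g_\tau)_\tau$ is bounded in $L_2(\delta,t;H^2(\R^2;\R^2))$ for every $0<\delta<t$, so after a further (diagonal) extraction we obtain the weak convergence \eqref{CO3}, and the limit satisfies $(f,g)\in L_2(0,t;H^2(\R^2;\R^2))$; the slab $[0,\delta]$ contributes only $\|f_\tau(t)\|_{H^1}^2\delta\le C\delta$ to the $L_2(0,t;H^1)$-norm, so it is harmless in the limit $\delta\to 0$. For the strong $H^1$-convergence \eqref{CO2} I would interpolate: writing $h_k:=f_{\tau_k}-f$ (and similarly for $g$), the Gagliardo--Nirenberg inequality $\|\nabla h_k\|_2\le C\|h_k\|_2^{1/2}\|h_k\|_{H^2}^{1/2}$ valid on $\R^2$ gives, by H\"older in time,
\begin{equation*}
\int_\delta^t\|\nabla h_k(s)\|_2^2\,ds\le C\left(\int_\delta^t\|h_k(s)\|_2^2\,ds\right)^{1/2}\left(\int_\delta^t\|h_k(s)\|_{H^2}^2\,ds\right)^{1/2}.
\end{equation*}
The first factor tends to $0$ by \eqref{CO1} and dominated convergence (the integrand is bounded by $C$ uniformly and converges pointwise in $s$), while the second is bounded uniformly in $k$ by \eqref{CO3} and the $L_\infty(0,\infty;H^1)$-bound. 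Adding the harmless contribution of $[0,\delta]$, which is $O(\delta)$, and then letting $\delta\to0$ after $k\to\infty$ yields \eqref{CO2}.

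\textbf{Main obstacle.} The delicate point is the pointwise-in-time compactness for \emph{every} $t$, not merely for $t$ in a countable dense set: one must upgrade the diagonal extraction to all times. This is exactly where the uniform modulus of continuity \eqref{eq:comp} in $H^{-4}$ is essential, interpolated against the uniform $H^1$-bound (via $\|\cdot\|_{L_2}\le C\|\cdot\|_{H^1}^{\theta}\|\cdot\|_{H^{-4}}^{1-\theta}$ for appropriate $\theta\in(0,1)$) to obtain an $L_2$-modulus of continuity uniform in $\tau$, which then lets a Cauchy-sequence argument close the extraction at arbitrary $t$. The remaining care is merely in checking that the various weak limits (pointwise $L_2$-limit, weak $L_2(0,t;H^1)$-limit, weak $L_2(\delta,t;H^2)$-limit) all coincide, which is immediate once tested against $C_0^\infty((0,\infty)\times\R^2)$ functions.
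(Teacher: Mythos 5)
Your proposal is correct, and for most of the lemma it follows the paper's own route: the uniform $L_\infty(0,\infty;H^1)$ bound from \eqref{a3} and Lemma~\ref{L:0}, the interpolation of the $H^{-4}$ modulus of continuity \eqref{eq:comp} against the $H^1$ bound to get a uniform $L_2$ modulus of continuity, the pointwise-in-time compactness via Lemma~\ref{le:comp}, and a refined Arzel\`a--Ascoli argument (the paper cites \cite[Proposition~3.3.1]{AGS08}, which is exactly the ``dense set of times plus uniform modulus'' mechanism you describe) to obtain \eqref{CO1} for every $t$; the identification of the limit in $\cK_2$ and the weak $L_2(\delta,t;H^2)$ convergence are also handled as in the paper. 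Where you genuinely diverge is in the proof of the strong convergence \eqref{CO2}: the paper deduces it from an Aubin--Lions--Simon compactness argument (\cite[Lemma~9]{Si87}) using the compact embedding of $H^2(\R^2)\cap L_1(\R^2,|x|^2\,dx)$ into $H^1(\R^2)$ established in Lemma~\ref{le:comp}, whereas you bypass Simon's lemma entirely by interpolating $\|\nabla h\|_2\le\|h\|_2^{1/2}\|h\|_{H^2}^{1/2}$ and exploiting the already-proved pointwise strong $L_2$ convergence together with the uniform $L_2(\delta,t;H^2)$ bound. Your route is more elementary and avoids an external compactness theorem at that step (it also makes the identification of the $H^1$ limit automatic, since it is obtained as a norm estimate on $f_{\tau_k}-f$ rather than as a new subsequential limit); the paper's route is more systematic and would survive in situations where one does not already have strong convergence for every fixed time. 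Both are sound, and your handling of the $[0,\delta]$ slab and of the order of limits $k\to\infty$ then $\delta\to 0$ is correct.
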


\begin{proof}
On the one hand, we remark that \eqref{eq:2} together with the estimate  \eqref{a3} and Lemma~\ref{L:0} imply that
\begin{equation}\label{E1}
\text{$(f_\tau)_{\tau\in(0,1)}$ and $(g_\tau)_{\tau\in(0,1)}$ are bounded in $L_\infty(0,\infty; H^1(\R^2;\R^2))$}.
\end{equation}
By interpolation, we have the inequality 
$$
\|h\|_2\leq \|h\|_{H^1}^{4/5}\|h\|_{H^{-4}}^{1/5}\,, \qquad h\in H^1(\R^2)\,,$$
which gives, together with \eqref{eq:comp} and \eqref{E1},
\begin{equation}\label{eq:comp2}
\|f_\tau(t)-f_{\tau}(s)\|_{2}+\|g_\tau(t)-g_{\tau}(s)\|_{2}\leq C(|t-s|+\tau)^{1/10}
\end{equation}
for all $\tau\in (0,1)$, $t\ge 0$, and $s\ge 0.$ 
On the other hand, for each $t\ge 0$, the sequence $(f_\tau(t),g_\tau(t))_{\tau\in(0,1)}$ lies in a  compact subset 
of $L_2(\R^2;\R^2)$ by \eqref{a4}, \eqref{E1}, and Lemma~\ref{le:comp}. 
Owing to these two properties, we can invoke \cite[Proposition~3.3.1]{AGS08} to conclude that there exists a  function $(f,g)\in C([0,\infty),L_2(\R^2;\R^2))$ and 
a subsequence $\tau_k\in (0,1) $, $\tau_k\to 0$, such that \eqref{CO1} holds true. In addition, we deduce from \eqref{CO1}, \eqref{E1}, and the Lebesgue dominated convergence theorem that 
\begin{equation}
(f_{\tau_k},g_{\tau_k}) \longrightarrow (f,g) \;\;\text{ in }\;\; L_2((0,T)\times\R^2) \;\;\text{ for all }\;\; T>0\,. \label{d20}
\end{equation}

We improve now this  convergence. 
Given $t\ge 1$ and $\delta\in (0,1),$ the estimates \eqref{a4}, \eqref{a5}, and \eqref{E1} ensure that
\begin{equation}\label{sera}
\int_\delta^t \left[ \|f_\tau(s)\|_{H^2}^2 + \|g_\tau(s)\|_{H^2}^2 \right]\ ds + \sup_{s\in (\delta,t)}\left\{ \int_{\R^2} (f_\tau+g_\tau)(s,x)\ (1+|x|^2)\ dx \right\} \le C\ (1+t)\,.
\end{equation}
By  Lemma~\ref{le:comp}, $H^2(\R^2)\cap L_1(\R^2,(1+|x|^2)\, dx)$ is compactly embedded in $H^1(\R^2),$ which in turn is continuously embedded in $L_2(\R^2),$ and we infer from \cite[Lemma~9]{Si87} that 
\begin{align*}
\text{$(f_{\tau_k},g_{\tau_k})\to (f,g) $ \qquad in $L_2(\delta,t;H^1(\R^2,\R^2))$,}
\end{align*}
which can be improved to \eqref{CO2} by using \eqref{E1}. 
Observing next that the right-hand side of \eqref{sera} does not depend on $\delta$, we realize that it follows from \eqref{sera} that, after possibly extracting a subsequence and using a diagonal process, we may assume that $(f,g)\in L_2(0,t;H^2(\R^2;\R^2))$ and that \eqref{CO3} holds true.

It remains to check that $(f(t),g(t))$ belongs to $\cK_2$ for all $t\ge 0$. Owing to \eqref{CO1} and \eqref{E1}, we readily obtain that $f(t)$ and $g(t)$ are both nonnegative and in $H^1(\R^2)$.
 In addition,   \eqref{CO1} and \eqref{sera} imply that $(f_{\tau_k}(t),g_{\tau_k}(t))_{k\ge 1}$ converges towards $(f(t),g(t))$ in $L_1(\R^2;\R^2)$ from which we deduce that $\|f(t)\|_1=\|g(t)\|_1=1$. 
Using once more \eqref{sera}, this convergence also guarantees that both $f(t)$ and $g(t)$ belong to $L_1(\R^2, (1+|x|^2)\, dx)$. Consequently, $(f(t),g(t))\in\cK_2$ for all $t\ge 0$ and the proof of Lemma~\ref{L:CO} is complete.
\end{proof}

\begin{proof}[Proof of Theorem~\ref{MT}]
Let us first check that the functions $(f,g)$ constructed in Lemma~\ref{L:CO} enjoy the regularity \textit{(i)} and \textit{(ii)} stated in Theorem~\ref{MT}. 
The boundedness and integrability properties \textit{(i)} follow at once from \eqref{E1} and \eqref{sera} by Lemma~\ref{L:CO}.
 We next use \eqref{CO1} to pass to the limit $k\to\infty$ in \eqref{eq:comp2} and obtain 
\begin{equation}\label{COC}
\|f(t)-f(s)\|_{2}+\|g(t)-g(s)\|_{2}\leq C |t-s|^{1/10}\qquad\text{for all $(s,t)\in [0,\infty)^2,$}
\end{equation}
which gives the assertion \textit{(ii)} of Theorem~\ref{MT}. 

We now identify the equations solved by $(f,g).$ 
For that purpose, we use  relations \eqref{A1} and \eqref{A2} to obtain, for $N\ge 1$, $t\in [N\tau,(N+1)\tau)$, and $\xi\in C_0^\infty(\R^2)$,
\begin{align}
\left| \int_{\R^2} (f_\tau(t)-f_0)\ \xi\, dx \right.
+& \left. \int_\tau^{(N+1)\tau} \int_{\R^2} \left[ \Delta(A f_\tau+B g_\tau)\ \mathrm{div}(f_\tau\nabla\xi) + f_\tau\ \nabla(a f_\tau+ b g_\tau)\cdot  \nabla\xi \right]\, dx\, ds\right| \nonumber\\
&\leq \ \frac{\|D^2\xi\|_{\infty}}{2}\ \sum_{n=1}^N W_2^2(f_\tau^n,f_\tau^{n-1}) \label{A11}
\end{align}
and
\begin{align}
\left| \int_{\R^2} (g_\tau(t)-g_0)\ \xi\, dx \right. +& \left.\int_\tau^{(N+1)\tau} \int_{\R^2}  \left[ \Delta( f_\tau+ g_\tau)\ \mathrm{div}( g_\tau \nabla\xi) + c\ g_\tau\ \nabla( f_\tau+  g_\tau)\cdot \nabla\xi \right]\, dx\,ds \right| \nonumber\\
&\leq  \frac{\|D^2\xi\|_{\infty}}{2}\ \sum_{n=1}^N W_2^2(g_\tau^n, g_\tau^{n-1})\,. \label{A22}
\end{align}

Let now $t>0$ be fixed. 
Before passing to the limit $\tau\to 0$ in \eqref{A11} and \eqref{A22}, let us point out that, owing to \eqref{E1} and \eqref{sera}, 
we have for all  integers $\nu\ge 1$ and $\tau>0$ with $(\nu+1)\tau\leq  t+1$,
\begin{align}
& \left| \int_{\nu\tau}^{(\nu+1)\tau} \int_{\R^2} \left[ \Delta(A f_\tau+B g_\tau)\ \mathrm{div}(f_\tau\nabla\xi) + f_\tau\ \nabla(a f_\tau+ b g_\tau)\cdot  \nabla\xi \right]\, dx\, ds \right| \nonumber\\
\le & \int_{\nu\tau}^{(\nu+1)\tau} \left[ \left( A \|\Delta f_\tau \|_{2} + B \|\Delta g_\tau \|_{2} \right) \|f_\tau\|_{H^1}\ \|\xi\|_{W_\infty^2} \right. \nonumber\\
& \hspace{2cm} + \left. \|f_\tau\|_2\ \|\nabla\xi\|_\infty\ \left( a \|\nabla f_\tau\|_{2} + b \|\nabla g_\tau\|_{2} \right) \right]\ ds \nonumber\\
\le & C (1+t)\ \sqrt{\tau}\ \|\xi\|_{W_\infty^2} \label{d21}
\end{align}
and
\begin{align}
& \left| \int_{\nu\tau}^{(\nu+1)\tau} \int_{\R^2}  \left[ \Delta( f_\tau+ g_\tau)\ \mathrm{div}( g_\tau \nabla\xi) + c\ g_\tau\ \nabla( f_\tau+  g_\tau)\cdot \nabla\xi \right]\, dx\,ds \right| \nonumber\\
\le & \int_{\nu\tau}^{(\nu+1)\tau} \left[ \left( \|\Delta f_\tau\|_2 + \|\Delta g_\tau\|_2 \right) \|g_\tau\|_{H^1}\ \|\xi\|_{W_\infty^2} \right. \nonumber\\
& \hspace{2cm} + \left. c\ \|g_\tau\|_2\ \|\nabla\xi\|_\infty\ \left( \|\nabla f_\tau\|_2 + \|\nabla g_\tau\|_2 \right) \right]\ ds \nonumber\\
\le & C (1+t)\ \sqrt{\tau}\ \|\xi\|_{W_\infty^2}\,. \label{d22}
\end{align}

We   fix  $\delta\in (0,t)$. For each $k\ge 1$, there are integers $N_k$ and $\nu_k$ such that $t\in [N_k\tau_k,(N_k+1)\tau_k)$ and $\delta\in [\nu_k \tau_k,(\nu_k+1)\tau_k)$. In virtue of \eqref{a2}, \eqref{A11}, and \eqref{d21} we obtain that, for $\xi\in C_0^\infty(\R^2)$, 
\begin{align}
& \left| \int_{\R^2} (f_{\tau_k}(t)-f_{\tau_k}(\delta))\ \xi\, dx \right. \nonumber\\
& \qquad + \left. \int_\delta^{t} \int_{\R^2} \left[ \Delta(A f_{\tau_k} + B g_{\tau_k})\ \mathrm{div}(f_{\tau_k}\nabla\xi) + f_{\tau_k} \nabla(a f_{\tau_k}+ b g_{\tau_k})\cdot \nabla\xi\right] dx\, ds \right| \nonumber\\
\leq &  \|D^2\xi\|_{\infty}\ \sum_{p=1}^\infty W_2^2(f_{\tau_k}^p,f_{\tau_k}^{p-1}) \nonumber\\
& +\left|\int_\delta^{(\nu_k+1){\tau_k}}\int_{\R^2} \left[ \Delta(A f_{\tau_k} + B g_{\tau_k})\ \mathrm{div}(f_{\tau_k}\nabla\xi) + f_{\tau_k}\ \nabla(a f_{\tau_k} + b g_{\tau_k})\cdot \nabla\xi \right] dx\, ds\right| \nonumber\\
&+\left|\int_{t}^{(N_k+1){\tau_k}}\int_{\R^2} \left[ \Delta(A f_{\tau_k} + B g_{\tau_k})\ \mathrm{div}(f_{\tau_k}\nabla\xi) + f_{\tau_k}\ \nabla(a f_{\tau_k}+ b g_{\tau_k})\cdot \nabla\xi\right] dx\, ds \right| \nonumber\\
\leq& C_3\ {\tau_k}\ \|D^2\xi\|_{\infty} + C (1+t)\ \sqrt{\tau_k}\ \|\xi\|_{W_\infty^2} \nonumber\\
\leq& C (1+t)\ \|\xi\|_{W^2_\infty}\ \sqrt{\tau_k}\,. \label{A111}
\end{align}

Let us now pass to the limit $\tau_k\to 0$ in \eqref{A111}. We note that the convergences \eqref{CO2} and \eqref{CO3} guarantee that
\begin{equation}
\begin{aligned}
&\Delta(A f_{\tau_k}+B g_{\tau_k})\ \nabla f_{\tau_k} \rightharpoonup \Delta(A f+B g)\ \nabla f \quad\text{in $L_1((\delta,t)\times\R^2;\R^2)$}, \\
& \Delta( Af_{\tau_k}+ Bg_{\tau_k})\ f_{\tau_k} \rightharpoonup \Delta(A f+B g)\ f \quad\text{in $L_1((\delta,t)\times\R^2)$},
\end{aligned}\label{Carol1}
\end{equation}
while \eqref{CO2} implies
\begin{equation}\label{Carol2}
f_{\tau_k}\ \nabla(a f_{\tau_k}+b g_{\tau_k}) \longrightarrow f\ \nabla (a f+b g) \quad\text{in $L_1((0,t)\times\R^2;\R^2)$}.
\end{equation}
We then let $k\to \infty$ in \eqref{A111} and use \eqref{CO1}, \eqref{Carol1}, and \eqref{Carol2} to conclude that
$$
\int_{\R^2} (f(t)-f(\delta))\ \xi\, dx + \int_\delta^{t} \int_{\R^2} \left[ \Delta(A f+B g)\ \mathrm{div}(f\nabla\xi) + f\ \nabla(a f+ b g)\cdot \nabla\xi \right] dx\, ds=0
$$
for all $\xi\in C^\infty_0(\R^2).$ By Lebesgue's dominated convergence theorem and \eqref{COC} we may let $\delta\to 0$ and thus obtain the first identity of \eqref{dem2}. 
The second identity of \eqref{dem2} follows in a similar way, starting from \eqref{A22} and \eqref{d22}.

\medskip

Let us now prove \eqref{dem3}. 
We fix $t>0$, $\delta\in (0,t)$ and take $k\ge 1$ sufficiently large so that $\tau_k\le \delta$ and  $n_k\ge 1$ such that $t\in [n_k\tau_k,(n_k+1)\tau_k)$. 
It follows from \eqref{QQQ} and \eqref{E1} that
\begin{equation}\label{energy}
\begin{aligned}
\int_{\delta}^{t} D_H(f_{\tau_k}(s),g_{\tau_k}(s))\, ds \le & \int_{\tau_k}^{(n_k+1)\tau_k} \left[ D_H(f_{\tau_k}(s),g_{\tau_k}(s)) + (b-a)\ \|\nabla f_{\tau_k}(s)\|_2^2 \right]\, ds \\
& + (a-b)\ \int_\delta^t \|\nabla f_{\tau_k}(s)\|_2^2\, ds \\
\le & H(f_0)-H(f_{\tau_k}(t)) + B \left( H(g_0)-H(g_{\tau_k}(t)) \right) \\
& + |b-a|\ \left( \int_{\tau_k}^\delta \|\nabla f_{\tau_k}(s)\|_2^2\, ds + \int_t^{(n_k+1)\tau_k} \|\nabla f_{\tau_k}(s)\|_2^2\, ds \right) \\
\le & H(f_0)-H(f_{\tau_k}(t)) + B \left( H(g_0)-H(g_{\tau_k}(t)) \right) + C\ |b-a\ \delta\,.
\end{aligned}
\end{equation}
Now, on the one hand, we infer from \eqref{CO2} and \eqref{CO3} that
\begin{align*}
\liminf_{k\to \infty} \int_{\delta}^{t} D_H(f_{\tau_k}(s),g_{\tau_k}(s))\, ds = & \liminf_{k\to \infty} \int_{\delta}^{t} \left[ D_H(f_{\tau_k}(s),g_{\tau_k}(s)) + (b-a)\ \|\nabla f_{\tau_k}\|_2^2 \right]\, ds \\
& + \lim_{k\to \infty} (a-b)\ \int_\delta^t \|\nabla f_{\tau_k}(s)\|_2^2\, ds \\
\ge & \int_{\delta}^{t} \left[ D_H(f(s),g(s)) + (b-a)\ \|\nabla f\|_2^2 \right]\, ds \\
& + (a-b)\ \int_\delta^t \|\nabla f(s)\|_2^2\, ds = \int_{\delta}^{t} D_H(f(s),g(s))\, ds\,. 
\end{align*}
On the other hand, it follows from \eqref{a4}, \eqref{CO1}, and \eqref{E1} by classical arguments that
$$
\lim_{k\to\infty} H(f_{\tau_k}(t)) + B H(g_{\tau_k}(t))  = H(f(t)) + B H(g(t))\,.
$$
see \cite{LMxx} for instance. 
Thanks to these two properties, we can pass to the limit $k\to\infty$ in \eqref{energy} and obtain
\begin{equation}\label{energy2}
\int_{\delta}^{t} D_H(f(s),g(s))\, ds \leq H(f_0) - H(f(t)) + B \left( H(g_0) - H(g(t)) \right) + C\ |b-a|\ \delta
\end{equation}
for all $\delta<t.$ 
By the monotone convergence theorem and the assertion $(i)$ of Theorem~\ref{MT} we may let $\delta\to 0$ in \eqref{energy2} and end up with \eqref{dem3}.

In order to obtain the last estimate \eqref{dem4}, we deduce from \eqref{C1} and \eqref{4a} that, if $t>\delta>0$ and $k$ is sufficiently large (so that $\tau_k<\delta$), then
\begin{equation}\label{raph1}
2 \E(f_0,g_0) \ge \int_\delta^t \left( \left\| w_{f_{\tau_k}} \right\|_2^2 + B \left\| w_{g_{\tau_k}} \right\|_2^2 \right) \, ds + 2 \E(f_{\tau_k}(t),g_{\tau_k}(t))\,,
\end{equation}
the functions $w_{f_{\tau_k}}$ and $w_{g_{\tau_k}}$ being defined in \eqref{QEftau} and \eqref{QEgtau}, respectively. Since $\E$ is bounded from below by Lemma~\ref{L:0}, we infer from \eqref{raph1} that $(w_{f_{\tau_k}})_k$ and $(w_{g_{\tau_k}})_k$ are bounded in $L_2((\delta,\infty)\times\R^2;\R^2)$ for all $\delta>0.$ Therefore, after possibly extracting a subsequence and using a diagonal process, we find vector fields $V_f$ and $V_g$ in $L_2((0,\infty)\times\R^2;\R^2)$ such that
\begin{equation}\label{raph}
\left( w_{f_{\tau_k}} , w_{g_{\tau_k}} \right) \rightharpoonup (V_f , V_g) \quad \text{in $L_2((\delta,\infty)\times\R^2;\R^2)$ for all $\delta>0.$}
\end{equation}
Owing to \eqref{CO1}, \eqref{E1}, and \eqref{raph}, we can first perform the  liminf $k\to \infty$ in \eqref{raph1}, then take the limit as $\delta\to 0$ with the help of the monotone convergence theorem in the resulting inequality, and thus arrive at
\begin{equation}\label{raph10}
2 \E(f_0,g_0) \ge \int_0^t \left( \|V_f\|_2^2 + B \|V_g\|_2^2 \right) \, ds + 2 \E(f(t),g(t)) \;\;\text{ for all }\;\; t\ge 0\,.
\end{equation}

It remains to identify the terms $V_f $ and $V_g.$ To this end, we remark first that \eqref{E1} ensures that $(\sqrt{f_{\tau_k}})_k$ and $(\sqrt{g_{\tau_k}})_k$ are bounded in $L_\infty(0,\infty; L_4(\R^2;\R^2))$, which implies, together with \eqref{raph1}, that the sequences $(j_{f_{\tau_k}})_k$ and $(j_{g_{\tau_k}})_k$ defined by $j_{f_{\tau_k}} = \sqrt{f_{\tau_k}}\ w_{f_{\tau_k}}$ and $j_{g_{\tau_k}} = \sqrt{g_{\tau_k}}\ w_{g_{\tau_k}}$, $k\ge 1$, are bounded in $L_2(\delta, \infty; L_{4/3}(\R^2;\R^2))$ for all $\delta>0.$
 Since $L_2(\delta, \infty; L_{4/3}(\R^2))$ is a reflexive space, there are vector fields $I_f$ and $I_g $ in $L_2(0, \infty; L_{4/3}(\R^2;\R^2))$ and a subsequence of $(\tau_k)_k$ (not relabeled) such that 
\begin{equation}\label{raph2}
\left( j_{f_{\tau_k}} , j_{g_{\tau_k}} \right) \rightharpoonup (I_f,I_g) \quad \text{in $L_2(\delta,\infty;L_{4/3}(\R^2;\R^2))$ for all $\delta>0.$}
\end{equation}
Combining \eqref{CO1}, \eqref{raph}, and \eqref{raph2} gives
\begin{equation}
I_f = \sqrt{f}\ V_ f \;\;\text{ and }\;\; I_g = \sqrt{g}\ V_g \;\;\text{ a.e. in }\;\; (0,\infty)\times\R^2\,. \label{raph3}
\end{equation}
 
 Consider now a test function $\Xi\in C_0^\infty((0,\infty)\times\R^2;\R^2)$. For each $k\ge 1$ and $n\ge 1$, we choose as test function
$$
\xi(x) = \int_{n\tau_k}^{(n+1)\tau_k} \Xi(s,x)\, ds\,, \qquad x\in\R^2\,,
$$
in \eqref{jef1} for $(f_{\tau_k}^n,g_{\tau_k}^n)$ and find, since  $(f_{\tau_k},g_{\tau_k})$ is constant on $[n\tau_k,(n+1)\tau_k)$:
\begin{align*}
\int_{n\tau_k}^{(n+1)\tau_k} \int_{\R^2} \left[ \Delta(A f_{\tau_k}+B g_{\tau_k})\ \mathrm{div}(f_{\tau_k} \Xi) \right. + &\left. f_{\tau_k}\ \nabla(a  f_{\tau_k} + b g_{\tau_k})\cdot \Xi \right]\, dx\, ds \\
= & \int_{n\tau_k}^{(n+1)\tau_k}  \int_{\R^2} j_{f_{\tau_k}}\cdot \Xi\, dx\, ds\,.
\end{align*}
Summing up the previous identity with respect to $n\ge 1$ gives
$$
\int_0^\infty \int_{\R^2} \left[ \Delta(A f_{\tau_k}+B g_{\tau_k})\ \mathrm{div}(f_{\tau_k} \Xi) + f_{\tau_k}\ \nabla(a  f_{\tau_k} + b g_{\tau_k})\cdot \Xi \right]\, dx\, ds = \int_0^\infty \int_{\R^2} j_{f_{\tau_k}}\cdot \Xi\, dx\, ds
$$
for $k$ large enough (such that $\mathrm{supp}(\Xi)\subset (\tau_k,\infty)\times\R^2$). Due to \eqref{CO2}, \eqref{CO3}, and \eqref{raph2}, we can pass to the limit as  $k\to\infty$ in the above equality and find
\begin{equation*} 
\int_0^\infty \int_{\R^2} \left[ \Delta(A f+B g)\ \mathrm{div}(f \Xi) + f\ \nabla(a  f+b g)\cdot \Xi \right]\, dx\, ds = \int_0^\infty \int_{\R^2} I_{f}\cdot \Xi\, dx\, ds\,,
\end{equation*}
that is, 
\begin{equation}\label{raph4}
I_f = - \nabla(f \Delta(Af+Bg)) + \Delta(A f+B g) \nabla f + f \nabla(a 
 f + b g)  \quad\text{in $\mathcal{D}'((0,\infty)\times\R^2;\R^2).$}
\end{equation}
A similar argument allows us to deduce from \eqref{jef2}, \eqref{CO2}, \eqref{CO3}, and \eqref{raph2} that 
\begin{equation}\label{raph5}
I_g = - \nabla(g \Delta(f+g)) + \Delta(f+g) \nabla g + c g \nabla(f+g)  \quad\text{in $\mathcal{D}'((0,\infty)\times\R^2;\R^2).$}
\end{equation}
Collecting \eqref{raph10}, \eqref{raph3}, \eqref{raph4}, and \eqref{raph5} gives the last assertion of Theorem~\ref{MT} and completes its proof. 

\end{proof}

\appendix
\section{Auxiliary results}

It is well-known that $H^1(\R^2)$ is not compactly embedded in $L_2(\R^2)$ due to the non-compactness of $\R^2$ but that compactness can be restored by an additional decay at infinity as in the following lemma:

\begin{lemma}\label{le:comp}
The spaces $H^1(\R^2)\cap L_1(\R^2,|x|^2\,dx)$ and $H^2(\R^2)\cap L_1(\R^2,|x|^2\, dx)$ are compactly embedded in $L_2(\R^2)$ and $H^1(\R^2)$, respectively.
\end{lemma}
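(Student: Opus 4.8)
\emph{Plan.} Both statements are classical consequences of the Rellich--Kondrachov theorem combined with a tightness argument that converts the second moment bound into uniform $L_2$ decay at infinity; I would treat the two embeddings in turn, deducing the second from the first.

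\emph{First embedding.} Let $(h_k)_{k\ge1}$ be bounded in $H^1(\R^2)\cap L_1(\R^2,|x|^2\,dx)$, say $\|h_k\|_{H^1}+\int_{\R^2}|x|^2|h_k|\,dx\le M$. First I would observe that $(h_k)_k$ is also bounded in $L_1(\R^2)$: indeed $\int_{|x|\le1}|h_k|\,dx\le C\|h_k\|_2$ while $\int_{|x|>1}|h_k|\,dx\le\int_{\R^2}|x|^2|h_k|\,dx\le M$. Since $H^1(\R^2)$ is continuously embedded in $L_4(\R^2)$, the sequence is bounded in $L_4(\R^2)$ as well, and H\"older's inequality interpolating $L_2$ between $L_1$ and $L_4$ gives, writing $B_R:=\{|x|<R\}$,
\[
\|h_k\|_{L_2(\R^2\setminus B_R)}\le\|h_k\|_{L_1(\R^2\setminus B_R)}^{1/3}\,\|h_k\|_{L_4(\R^2\setminus B_R)}^{2/3}\le\Big(\tfrac{1}{R^2}\int_{\R^2}|x|^2|h_k|\,dx\Big)^{1/3}(CM)^{2/3}\le\frac{CM}{R^{2/3}},
\]
so that $\sup_k\|h_k\|_{L_2(\R^2\setminus B_R)}\to0$ as $R\to\infty$ (this is the tightness of the $L_2$ masses; alternatively one may multiply by a cutoff supported outside $B_R$ and invoke \eqref{eq:1}). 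On the other hand, $H^1(B_R)$ is compactly embedded in $L_2(B_R)$ for each $R\in\N$ by the Rellich--Kondrachov theorem, so a diagonal extraction yields a subsequence (not relabelled) and a function $h\in H^1(\R^2)$, with $\int_{\R^2}|x|^2|h|\,dx\le M$ by Fatou's lemma and hence the same tail estimate, such that $h_k\to h$ in $L_2(B_R)$ for every $R$. Combining the two facts through the splitting $\|h_k-h\|_2\le\|h_k-h\|_{L_2(B_R)}+\|h_k\|_{L_2(\R^2\setminus B_R)}+\|h\|_{L_2(\R^2\setminus B_R)}$, and choosing $R$ large and then $k$ large, one concludes that $h_k\to h$ in $L_2(\R^2)$.

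\emph{Second embedding.} Now let $(h_k)_k$ be bounded in $H^2(\R^2)\cap L_1(\R^2,|x|^2\,dx)$ with bound $M$. Since $H^2(\R^2)\subset H^1(\R^2)$, the first part already provides a subsequence with $h_k\to h$ in $L_2(\R^2)$; extracting further if necessary we also have $h_k\rightharpoonup h$ in $H^2(\R^2)$, so $h\in H^2(\R^2)$ and $\|\Delta h\|_2\le M$. The convergence of the gradients then follows from the elementary identity $\|\nabla v\|_2^2=-\int_{\R^2}v\,\Delta v\,dx$, valid for $v\in H^2(\R^2)$: applied to $v=h_k-h$ it gives
\[
\|\nabla(h_k-h)\|_2^2\le\|h_k-h\|_2\,\|\Delta(h_k-h)\|_2\le 2M\,\|h_k-h\|_2\longrightarrow0,
\]
whence $h_k\to h$ in $H^1(\R^2)$, which is the assertion.

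\emph{Main obstacle.} There is no genuine difficulty here; the only step beyond Rellich--Kondrachov and a routine $\varepsilon$-splitting is the uniform decay of the $L_2$-tails $\|h_k\|_{L_2(\R^2\setminus B_R)}$, which is what forces one to exploit the weighted bound $\int_{\R^2}|x|^2|h_k|\,dx\le M$ rather than mere boundedness in $H^1$. The gradient part of the second embedding, by contrast, needs no separate tightness estimate once the $L_2$ convergence is known, thanks to the interpolation inequality above.
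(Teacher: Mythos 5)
Your proof is correct and follows essentially the same route as the paper's: Rellich--Kondrachov on balls for local compactness, the second moment combined with a Sobolev embedding (into $L_4$ in your case, into $L_3$ in the paper's) to control the $L_2$ tails uniformly, and the interpolation $\|\nabla v\|_2^2\le\|v\|_2\,\|\Delta v\|_2$ for the $H^2$ part. The only cosmetic difference is that you bound the tails of $h_k$ and $h$ separately via an $L_1$--$L_4$ H\"older interpolation and Chebyshev, whereas the paper estimates $\int_{|x|>N}|h_k-h|^2\,dx$ directly by a weighted Cauchy--Schwarz inequality.
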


\begin{proof}
Let  $(h_k)_{k\ge 1}$ be a bounded sequence in $H^1(\R^2)\cap L_1(\R^2,|x|^2\,dx)$. Without loss of generality, we may assume that there is a function $h\in H^1(\R^2)$ such that $h_k\rhp h$ in $H^1(\R^2).$ Furthermore, the Rellich theorem guarantees that $(h_{k|\D_N})_{k\ge 1}$ is relatively compact in $L_2(\D_N)$ for all integers $N\ge 1,$ where $\D_N$ is the open disc centered in zero and of radius $N$ and $h_{k|\D_N}$ the restriction of $h_k$ to $\D_N.$ We may then extract a subsequence, denoted again by $(h_k)_{k\ge 1},$ such that $(h_k)_{k\ge 1}$ converges (strongly) towards $h$ in $L_2(\D_N)$ for all $N\ge 1.$

First, for each $N\ge 1$, we have that
 \begin{align*}
\sup_{k\ge 1}\left\{ \int_{\R^2} |h_k(x)|\ |x|^2\, dx \right\} \geq \lim_{k\to\infty} \int_{\D_N} |h_k(x)|\ |x|^2\, dx = \int_{\D_N} |h(x)|\ |x|^2\, dx,
\end{align*}
so that $h\in L_1(\R^2,|x|^2\,dx).$ Next, we have
\begin{align*}
\|h_k-h\|_2^2 \leq& \int_{\D_N} |h_k(x)-h(x)|^2\, dx + \int_{\R^2\setminus\D_N} |(h_k-h)(x)|^2\, dx\\
\le & \int_{\D_N} |(h_k-h)(x)|^2\, dx + \frac{1}{N}\ \int_{\R^2\setminus\D_N} |(h_k-h)(x)|^2\ |x|\, dx\\
\le &\int_{\D_N} |(h_k-h)(x)|^2\, dx\\
& + \frac{1}{N}\ \left( \int_{\R^2} |(h_k-h)(x)|\ |x|^2\, dx \right)^{1/2} \|h_k-h\|_{3}^{3/2}.
\end{align*}
Since $H^1(\R^2)$ is continuously embedding in $L_3(\R^2)$ and the function $h$ belongs to $H^1(\R^2)$ and $L_1(\R^2,|x|^2\,dx),$ there exists a constant $C$ such that
$$
\|h_k-h\|_2^2\leq \int_{\D_N}|(h_k-h)(x)|^2\, dx+\frac{C}{N}\qquad\text{for all $k\geq1$ and $N\ge 1$.}
$$
Letting first $k\to\infty$ and then $N\to\infty$, we conclude that $(h_k)_{k\ge 1}$ converges towards $h$ in $L_2(\R^2)$ and thus that $H^1(\R^2)\cap L_1(\R^2,|x|^2\,dx)$ is compactly embedded in $L_2(\R^2)$.
  
Consider now a bounded sequence $(h_k)_{k\geq 1}$ in  $H^2(\R^2)\cap L_1(\R^2,|x|^2\, dx)$. Owing to the previous result, there exist a subsequence, denoted again by $(h_k)_{k\geq 1}$, and a function $h\in H^2(\R^2)\cap L_1(\R^2,|x|^2\,dx)$ such that $(h_k)_{k\ge 1}$ converges towards $h$ strongly in $L_2(\R^2)$ and weakly in $H^2(\R^2).$ Since
$$
\|w\|_{H^1} \le C\ \|w\|_{H^2}^{1/2}\ \|w\|_2^{1/2}\,, \qquad w\in H^2(\R^2)\,,
$$
a simple interpolation argument then gives that $(h_k)_{k\ge 1}$ converges towards $h$ strongly in $H^1(\R^2)$ and completes the proof.
\end{proof}

The next result was used in the identification of the Euler-Lagrange equation for the minimizers of the minimization problem \eqref{Zmin}.

\begin{lemma}\label{L:2} 
 Consider $h\in H^1(\R^2)$, $\zeta=(\zeta_1,\zeta_2)\in C_0^\infty(\R^2;\R^2)$ and, for $\e$ small enough, 
define $\zeta_\e := \id + \e\zeta$, $ j_\e:= \mathrm{det}(D\zeta_\e)$, and $h_\e := (h\circ \zeta_\e^{-1})/(j_\e\circ\zeta_\e^{-1})$. 
Then, there is $\e_\zeta>0$ such that $((h_\e-h)/\e)_{\e\in (0,\e_\zeta)}$ is bounded in $L_2(\R^2)$.
\end{lemma}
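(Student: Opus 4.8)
The plan is to write $h_\e - h$ explicitly using the change of variables formula and to split it into two pieces: one which is controlled by the $L_2$-norm of $h$ (together with $C_0^\infty$-bounds on $\zeta$), and one which is a difference quotient of $h$ along the flow generated by $\zeta$ and is controlled by the $L_2$-norm of $\nabla h$. First I would fix $\e_\zeta>0$ small enough so that, for $\e\in(0,\e_\zeta)$, the map $\zeta_\e=\id+\e\zeta$ is a $C^\infty$-diffeomorphism of $\R^2$ with Jacobian determinant
\[
j_\e = 1 + \e\,\mathrm{div}(\zeta) + \e^2\,\mathrm{det}(D\zeta)
\]
satisfying $1/2 \le j_\e \le 2$ on $\R^2$ and such that $\|j_\e - 1\|_\infty \le C\e$ and $\|j_\e^{-1} - 1\|_\infty \le C\e$, with $C$ depending only on $\zeta$. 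Since $\zeta$ has compact support, $\zeta_\e = \id$ and $j_\e = 1$ outside a fixed compact set, so $h_\e = h$ there and all integrals below are effectively over a fixed compact neighbourhood of $\supp\zeta$.

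Next I would perform the change of variables $y = \zeta_\e(x)$ in $\|(h_\e-h)/\e\|_2^2$ to bring everything onto a common domain. Writing
\[
h_\e \circ \zeta_\e = \frac{h}{j_\e}\,,
\]
one gets, after substitution,
\[
\int_{\R^2} \left| \frac{h_\e - h}{\e} \right|^2 dx = \int_{\R^2} \left| \frac{1}{\e}\left( \frac{h(x)}{j_\e(x)} - h(\zeta_\e(x)) \right) \right|^2 j_\e(x)\, dx\,.
\]
Inside the bracket I would split
\[
\frac{h}{j_\e} - h\circ\zeta_\e = h\left( \frac{1}{j_\e} - 1 \right) + \left( h - h\circ\zeta_\e \right)\,.
\]
The first term divided by $\e$ is bounded in $L_2$ by $\|h\|_2 \cdot \|(1/j_\e - 1)/\e\|_\infty \le C\|h\|_2$, using the explicit formula for $j_\e$. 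For the second term, $(h - h\circ\zeta_\e)/\e = -(h(\id+\e\zeta) - h)/\e$ is the standard difference quotient of $h$ along the direction field $\zeta$; by a routine density argument (approximate $h$ in $H^1$ by smooth functions and use that $\|(h(\cdot+\e v) - h)/\e\|_2 \le \|v\|_\infty \|\nabla h\|_2$ passes to the limit, or directly estimate using the fundamental theorem of calculus along the segment $t\mapsto x + t\e\zeta(x)$ combined with a change of variables) one has $\|(h - h\circ\zeta_\e)/\e\|_2 \le C\|\nabla h\|_2$ uniformly for $\e\in(0,\e_\zeta)$. Combining the two bounds and using $j_\e \le 2$ yields the claim.

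The main technical point is the uniform $L_2$-bound on the difference quotient $(h - h\circ\zeta_\e)/\e$ for merely $H^1$ functions $h$ along the possibly nonlinear diffeomorphism $\zeta_\e$; this is where one must be a little careful, but it is handled by the density of $C_0^\infty(\R^2)$ in $H^1(\R^2)$ together with the uniform bounds on $j_\e$ (which make the change of variables harmless), so no genuine obstacle arises.
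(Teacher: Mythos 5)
Your proposal is correct and follows essentially the same route as the paper: the same change of variables $y=\zeta_\e(x)$, the same splitting $h/j_\e - h\circ\zeta_\e = h(1/j_\e-1) + (h-h\circ\zeta_\e)$, the first piece controlled via the explicit formula for $j_\e$ and the second via the fundamental theorem of calculus along $t\mapsto x+t\e\zeta(x)$ (for smooth $h$, then density) together with the uniform two-sided bounds on $j_\e$. No substantive difference.
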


\begin{proof}
Let us first consider the case $h\in C_0^\infty(\R^2)$. Then, for $\e$ small enough,
\begin{align*}
\|h_\e - h \|_2^2  = & \int_{\R^2} \left| h_\e\circ \zeta_\e - h\circ\zeta_\e \right|^2\ j_\e\ dx = \int_{\R^2} \left| \frac{h}{j_\e} - h\circ\zeta_\e \right|^2\ j_\e\ dx \\ 
\le & 2\ \int_{\R^2} |h|^2\ \frac{(1-j_\e)^2}{j_\e}\ dx + 2\ \int_{\R^2} \left|  h - h\circ\zeta_\e \right|^2\ j_\e\ dx\,.
\end{align*}
Recalling that $j_\e = 1 + \e\ \mathrm{div} \zeta + o(\e)$, we have
\begin{equation}
\frac{1}{2} \le j_\e \le 2 \;\;\mbox{ and }\;\; \left| 1-j_\e \right|\le 2\ \|\zeta\|_{W^1_\infty}\ \e \label{newA}
\end{equation}
for $\e$ small enough, and we realize that
$$
2\ \int_{\R^2} |h|^2\ \frac{(1-j_\e)^2}{j_\e}\ dx \le 16\ \|\zeta\|_{W^1_\infty}^2\ \|h\|_2^2\ \e^2\,.
$$
Next, using once more \eqref{newA},
\begin{align*}
2\ \int_{\R^2} \left|  h - h\circ\zeta_\e \right|^2\ j_\e\ dx \le & 4\e^2\ \int_{\R^2} \left( \int_0^1 \left| \nabla h(x+\e s \zeta(x))\cdot \zeta(x) \right|\ ds\, \right)^2\, dx\\
 \le & 4\e^2\ \int_{\R^2} \left| \zeta(x) \right|^2\  \int_0^1 |\nabla (h\circ\zeta_{s\e})(x)|^2\, ds\, dx \\ 
\le & 4\e^2\ \|\zeta\|_\infty^2\ \int_0^1 \int_{\R^2} \frac{|\nabla h|^2}{j_{\e s}\circ\zeta_{\e s}^{-1}}\, dy\, ds \\
\le & 8\e^2\ \|\zeta\|_\infty^2\ \|\nabla h\|_2^2
\end{align*}
for sufficiently small $\e$. 
Combining the above inequalities gives the claimed boundedness of $(h_\e-h)/\e$ in $L_2(\R^2)$ for $\e$ small enough.

The general case $h\in H^1(\R^2)$ next follows by a density argument.
\end{proof}

We finally recall some well-known estimates for the functional $H$ defined in \eqref{eq:H}, see, e.g., \cite[Lemma~A.1]{LMxx}.

\begin{lemma}\label{L:A2} Let $h$ be a nonnegative function in $L_1(\R^2,(1+x^2)\, dx)\cap L_2(\R^2).$
Then $h\ln h\in L_1(\R^2)$ and there exists a positive constant $C_H$ such that
\begin{align}
\label{A:1}
\int_{\R^2} h(x)\ |\ln h(x)|\, dx & \leq C_H + \int_{\R^2} h(x) (1+|x|^2)\, dx+\|h\|_2^2,\\
\label{A:2}
H(h) & \geq -C_H - \int_{\R^2} h(x) (1+|x|^2)\, dx.
\end{align}
\end{lemma}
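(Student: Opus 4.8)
The plan is to reduce the whole statement to two elementary one-variable inequalities for $s\mapsto s|\ln s|$ together with a single Hölder estimate that exploits the weight $(1+|x|^2)$ and the fact that $x\mapsto(1+|x|^2)^{-3}$ is integrable over $\R^2$. All constants produced along the way depend only on this last (universal) integral, so one and the same $C_H$ will serve for both \eqref{A:1} and \eqref{A:2}.

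First I would record the pointwise bounds: for $s\ge1$ one has $0\le s\ln s\le s^2$ since $\ln s\le s$, while for $s\in[0,1]$ a short calculus computation gives $-s\ln s\le(\alpha e)^{-1}s^{1-\alpha}$ for every $\alpha\in(0,1)$; taking $\alpha=1/4$ yields $-s\ln s\le\frac{4}{e}s^{3/4}$. Hence $s|\ln s|\le\frac{4}{e}s^{3/4}+s^2$ for all $s\ge0$, and therefore, for any nonnegative measurable $h$,
\[
\int_{\R^2}h|\ln h|\,dx\le\frac{4}{e}\int_{\R^2}h^{3/4}\,dx+\|h\|_2^2 .
\]
Next I would estimate the first term on the right: writing $h^{3/4}=\big(h(1+|x|^2)\big)^{3/4}(1+|x|^2)^{-3/4}$ and applying Hölder's inequality with exponents $4/3$ and $4$ gives
\[
\int_{\R^2}h^{3/4}\,dx\le\Big(\int_{\R^2}h(1+|x|^2)\,dx\Big)^{3/4}\,\kappa^{1/4},\qquad \kappa:=\int_{\R^2}(1+|x|^2)^{-3}\,dx<\infty .
\]
A Young inequality of the form $\tfrac{4}{e}\kappa^{1/4}t^{3/4}\le t+C_H$ (valid for all $t\ge0$, with $C_H$ depending only on $\kappa$) then absorbs the sublinear term and yields \eqref{A:1}. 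Since $h\in L_1(\R^2,(1+|x|^2)\,dx)\cap L_2(\R^2)$, the right-hand side of \eqref{A:1} is finite, so $h|\ln h|\in L_1(\R^2)$ and in particular $h\ln h\in L_1(\R^2)$, i.e. $H(h)$ is a well-defined real number.

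For the lower bound \eqref{A:2} I would split $H(h)=\int_{\{h>1\}}h\ln h\,dx+\int_{\{h\le1\}}h\ln h\,dx$, discard the first integral (which is nonnegative), and use $h\ln h\ge-\tfrac{4}{e}h^{3/4}$ on $\{h\le1\}$ together with the same weighted estimate as above to obtain
\[
H(h)\ge-\frac{4}{e}\int_{\R^2}h^{3/4}\,dx\ge-C_H-\int_{\R^2}h(1+|x|^2)\,dx ,
\]
with the very same constant $C_H$.

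There is no genuine obstacle here; the only point requiring a little attention is the exponent in the sublinear bound $-s\ln s\le c_\alpha s^{1-\alpha}$ near $s=0$: one must take $\alpha<1/2$ (any such value works, e.g. $\alpha=1/4$, giving the power $3/4$ used above) so that the dual weight $(1+|x|^2)^{-(1-\alpha)/\alpha}$ is integrable over $\R^2$. The borderline choice $\alpha=1/2$ would fail, since $(1+|x|^2)^{-1}\notin L_1(\R^2)$.
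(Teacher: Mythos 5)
Your proof is correct and complete. Note that the paper itself does not prove this lemma: it simply cites \cite[Lemma~A.1]{LMxx}, so there is no in-paper argument to compare against; your write-up supplies a self-contained proof where the paper has none.

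As for the substance: all the steps check out. The elementary bound $-s\ln s\le(\alpha e)^{-1}s^{1-\alpha}$ on $[0,1]$ is verified by maximizing $u\mapsto -u\ln u$ after the substitution $u=s^\alpha$; the Hölder step with exponents $4/3$ and $4$ correctly produces the weight $(1+|x|^2)^{-3}$, which is integrable on $\R^2$; and the Young-type absorption $\tfrac4e\kappa^{1/4}t^{3/4}\le t+C_H$ is valid since $t^{3/4}=o(t)$. Your closing remark about needing $\alpha<1/2$ so that $(1+|x|^2)^{-(1-\alpha)/\alpha}\in L_1(\R^2)$ is exactly the right point of caution in dimension two. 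For comparison, the classical route (the one usually found in the literature this lemma descends from) estimates $\int_{\{h\le 1\}}h\ln(1/h)$ by splitting further according to whether $h\le e^{-|x|^2}$ (where one bounds $h\ln(1/h)\lesssim\sqrt h\le e^{-|x|^2/2}$) or $e^{-|x|^2}\le h\le1$ (where $\ln(1/h)\le|x|^2$); that argument avoids Hölder entirely but needs the extra case split, whereas yours trades the case split for one Hölder inequality and a single universal constant $\kappa$. Both are equally elementary; yours is arguably cleaner to state.
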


\bibliographystyle{abbrv}
\bibliography{LM_Mixed}

\begin{thebibliography}{10}

\bibitem{Ag05}
M.~Agueh.
\newblock Existence of solutions to degenerate parabolic equations via the
  {M}onge-{K}antorovich theory.
\newblock {\em Adv. Differential Equations}, 10(3):309--360, 2005.

\bibitem{AGS08}
L.~Ambrosio, N.~Gigli, and G.~Savar{\'e}.
\newblock {\em Gradient flows in metric spaces and in the space of probability
  measures}.
\newblock Lectures in Mathematics ETH Z{\"u}rich. Birkh{\"a}user Verlag, Basel,
  second edition, 2008.

\bibitem{AS08}
L.~Ambrosio and S.~Serfaty.
\newblock A gradient flow approach to an evolution problem arising in
  superconductivity.
\newblock {\em Comm. Pure Appl. Math.}, 61(11):1495--1539, 2008.

\bibitem{BCC08}
A.~Blanchet, V.~Calvez, and J.~A. Carrillo.
\newblock Convergence of the mass-transport steepest descent scheme for the
  subcritical {P}atlak-{K}eller-{S}egel model.
\newblock {\em SIAM J. Numer. Anal.}, 46(2):691--721, 2008.

\bibitem{BLxx}
A.~Blanchet and {\relax Ph}.~{Lauren\c{c}ot}.
\newblock The parabolic-parabolic {K}eller-{S}egel system with critical
  diffusion as a gradient flow in $\mathbb{R}^d$, $d \ge 3$.
\newblock preprint arXiv:1203.3573.

\bibitem{CG04}
E.~A. Carlen and W.~Gangbo.
\newblock Solution of a model {B}oltzmann equation via steepest descent in the
  2-{W}asserstein metric.
\newblock {\em Arch. Ration. Mech. Anal.}, 172(1):21--64, 2004.

\bibitem{CLxx}
J.~A. Carrillo and S.~Lisini.
\newblock A {W}asserstein-$l^2$ mixed gradient flow approach to the fully
  parabolic {K}eller-{S}egel model.
\newblock Personal communication.

\bibitem{CMV06}
J.~A. Carrillo, R.~J. McCann, and C.~Villani.
\newblock Contractions in the 2-{W}asserstein length space and thermalization
  of granular media.
\newblock {\em Arch. Ration. Mech. Anal.}, 179(2):217--263, 2006.

\bibitem{ELM11}
J.~Escher, {\relax Ph}.~{Lauren\c{c}ot}, and B.-V. Matioc.
\newblock Existence and stability of weak solutions for a degenerate parabolic
  system modelling two-phase flows in porous media.
\newblock {\em Ann. Inst. H. Poincar{\'e} Anal. Non Lin{\'e}aire},
  28(4):583--598, 2011.

\bibitem{EMM12}
J.~Escher, A.-V. Matioc, and B.-V. Matioc.
\newblock Modelling and analysis of the {M}uskat problem for thin fluid layers.
\newblock {\em J. Math. Fluid Mech.}, 14:267--277, 2012.

\bibitem{EMxx}
J.~Escher and B.-V. Matioc.
\newblock Existence and stability of solutions for a strongly coupled system
  modelling thin fluid films.
\newblock {\em NoDEA Nonlinear Differential Equations Appl.}, 2012.
\newblock DOI 10.1007/s00030-012-0166-1.

\bibitem{JKO98}
R.~Jordan, D.~Kinderlehrer, and F.~Otto.
\newblock The variational formulation of the {F}okker-{P}lanck equation.
\newblock {\em SIAM J. Math. Anal.}, 29(1):1--17, 1998.

\bibitem{LMxx}
{\relax Ph}.~Lauren\c{c}ot and B.-V. Matioc.
\newblock A gradient flow approach to a thin film approximation of the {M}uskat
  problem.
\newblock {\em Calc. Var. Partial Differential Equations}, 2012.
\newblock DOI 10.1007/s00526-012-0520-5.

\bibitem{Le07}
L.~G. Leal.
\newblock {\em Advanced transport phenomena. Fluid mechanics and convective
  transport processes}.
\newblock Cambridge Series in Chemical Engineering. Cambridge University Press,
  Cambridge, 2007.

\bibitem{LMS12}
S.~Lisini, D.~Matthes, and G.~Savar{\'e}.
\newblock {C}ahn-{H}illiard and thin film equations with nonlinear mobility as
  gradient flows in weighted-{W}asserstein metrics.
\newblock {\em J. Differential Equations}, 253:814--850, 2012.

\bibitem{Mxx}
B.-V. Matioc.
\newblock Nonnegative global weak solutions for a degenerate parabolic system
  modeling thin films driven by capillarity.
\newblock {\em Proc. Roy. Soc. Edinburgh Sect. A}, 2012.
\newblock To appear.

\bibitem{MMS09}
D.~Matthes, R.~J. McCann, and G.~Savar{\'e}.
\newblock A family of nonlinear fourth order equations of gradient flow type.
\newblock {\em Comm. Partial Differential Equations}, 34(10-12):1352--1397,
  2009.

\bibitem{Ot98}
F.~Otto.
\newblock Dynamics of labyrinthine pattern formation in magnetic fluids: a
  mean-field theory.
\newblock {\em Arch. Rational Mech. Anal.}, 141(1):63--103, 1998.

\bibitem{Ot01}
F.~Otto.
\newblock The geometry of dissipative evolution equations: the porous medium
  equation.
\newblock {\em Comm. Partial Differential Equations}, 26(1-2):101--174, 2001.

\bibitem{Si87}
J.~Simon.
\newblock Compact sets in the space {$L^p(0,T;B)$}.
\newblock {\em Ann. Mat. Pura Appl. (4)}, 146:65--96, 1987.

\bibitem{Vi03}
C.~Villani.
\newblock {\em Topics in optimal transportation}, volume~58 of {\em Graduate
  Studies in Mathematics}.
\newblock American Mathematical Society, Providence, RI, 2003.

\end{thebibliography}

 \end{document}